\theoremstyle{plain}
\newtheorem{theorem}{Theorem}
\newtheorem{lemma}{Lemma}
\theoremstyle{definition}
\theoremstyle{remark}
\DeclareMathAlphabet\mathbfcal{OMS}{cmsy}{b}{n}
\newcommand{\ten}[1]{\mathbfcal{#1}} 
\newcommand{\mat}[1]{\mathbf{#1}}
\newcommand{\ccf}[1]{\textcolor{black}{#1}}
\newcommand{\thickhline}{\noalign{\hrule height 1.0pt}}
\newcommand{\parm}{{\xi}}
\newcommand{\vecpar}{\boldsymbol{\parm}}
\newcommand{\parNum}{d}
\newcommand{\out}{y}
\newcommand{\multiGPC}{\Psi }
\newcommand{\polyInd}{\alpha}
\newcommand{\basisInd}{\boldsymbol{\polyInd}}
\newcommand{\pcOrder}{p}
\newcommand{\yPC}{\sum\limits_{|\basisInd|=0}^{\pcOrder} {c_{\basisInd}  \multiGPC_{\basisInd}  (\vecpar)} }
\newcommand{\parInd}{k}
\newcommand{\muvec}{\boldsymbol{\mu} }
\newcommand{\Sigmamat}{\mathbf{\Sigma}}
\begin{document}

\title{Stochastic Collocation with Non-Gaussian Correlated Process Variations: Theory, Algorithms and Applications}

\author{Chunfeng~Cui, and~Zheng~Zhang,~\IEEEmembership{Member,~IEEE}
\thanks{Some preliminary results of this work were reported in~\cite{Cui:EPEPS2018} and received the best conference paper award at EPEPS 2018. This work was partly supported by NSF-CCF Award No. 1763699, the UCSB start-up grant and a Samsung Gift Funding.}
\thanks{Chufeng Cui and Zheng Zhang are with the Department of Electrical and Computer Engineering, University of California, Santa Barbara, CA 93106, USA (e-mail: chunfengcui@ucsb.edu, zhengzhang@ece.ucsb.edu).}
}

 \maketitle

\begin{abstract}
Stochastic spectral methods have achieved great success in the uncertainty quantification of many engineering problems, including electronic and photonic integrated circuits influenced by fabrication process variations. Existing techniques employ a generalized polynomial-chaos expansion, and they almost always assume that all random parameters are mutually independent or Gaussian correlated. However, this assumption is rarely true in real applications. How to handle non-Gaussian correlated random parameters is a long-standing and fundamental challenge. A main bottleneck is the lack of theory and computational methods to perform a projection step in a correlated uncertain parameter space. This paper presents an optimization-based approach to automatically determinate the quadrature nodes and weights required in a projection step, and develops an efficient stochastic collocation algorithm for systems with non-Gaussian correlated parameters. We also provide some theoretical proofs for the complexity and error bound of our proposed method. Numerical experiments on synthetic, electronic and photonic integrated circuit examples show the nearly exponential convergence rate and excellent efficiency of our proposed approach. Many other challenging uncertainty-related problems can be further solved based on this work.

\end{abstract}

\begin{IEEEkeywords}
Non-Gaussian correlation, uncertainty quantification, process variation, integrated circuits, photonic integrated circuits, stochastic modeling and simulation.
\end{IEEEkeywords}

\IEEEpeerreviewmaketitle

\section{Introduction}
\IEEEPARstart{P}{rocess} variation (e.g., random doping fluctuations and line edge roughness) is a major concern in nano-scale fabrications~\cite{variation2008}: even a random difference on the atomic scale can have a large impact on the electrical properties of electronic integrated circuits (IC)~\cite{Miranda}, causing significant performance degradation and yield reduction. This issue is more severe in photonic IC~\cite{selvaraja2010subnanometer,Zortman:10,chrostowski2014impact}, as photonic IC is much more sensitive to geometric variations such as surface roughness due to its large device dimension compared with the small operation wavelength~\cite{lu2017performance}. In order to address this long-standing and increasingly important issue, efficient uncertainty quantification tools should be developed to predict and control the uncertainties of chip performance under various process variations. Due to its ease of implementation, Monte Carlo~\cite{MCintro, lu2017performance} has been used in many commercial design automation tools. However, a Monte Carlo method often requires a huge number of device- or circuit-level simulation samples to achieve acceptable accuracy, and thus it is very time-consuming. As an alternative,
stochastic spectral methods~\cite{book:Dxiu} may achieve orders-of-magnitude speedup over Monte Carlo methods in many application domains.

A stochastic spectral method approximates an unknown uncertain quantity (e.g., the nodal voltage, branch current or power dissipation of a circuit) as a linear combination of some specialized basis functions such as the generalized polynomial chaos~\cite{gPC2002}.
Both intrusive (i.e., non-sampling) solvers (e.g., stochastic Galerkin~\cite{sfem} and stochastic testing~\cite{zzhang:tcad2013}) and non-intrusive (i.e., sampling) solvers (e.g., stochastic collocation~\cite{col:2005}) have been developed to compute the unknown weights of these pre-defined basis functions. These techniques have been successfully applied in electronic IC
 \cite{vrudhula2006hermite, pham2014decoupled, Stievano:2011_1,  Strunz:2008,Pulch:2011_1, Rufuie2014, manfredi:tcas2014, zzhang:tcas2_2013,ahadi2016sparse,yucel2015me}, MEMS~\cite{zzhang_cicc2014, zzhang:huq_tcad} and photonic IC~\cite{twweng:optsEx, waqas2018stochastic} applications, achieving orders-of-magnitude speedup than Monte Carlo when the number of random parameters is small or medium. In the past few years, there has been a rapid progress in developing high-dimensional uncertainty quantification solvers. Representative results include tensor recovery~\cite{zhang2017big}, compressive sensing \cite{li2010finding}, ANOVA (analysis of variance) or HDMR (high-dimensional model representation)~\cite{yang2012adaptive,ma2010adaptive,zzhang_cicc2014}, matrix low-rank approximation~\cite{el2010stochastic}, stochastic model order reduction~\cite{el2010variation}, and hierarchical uncertainty quantification~\cite{zzhang_cicc2014,zzhang:huq_tcad}.

The above existing techniques use generalized polynomial chaos~\cite{gPC2002} as their basis functions, and they assume that all process variations can be described by independent random parameters. Unfortunately, this assumption is not true in many practical cases. For instance, the geometric or electrical parameters influenced by the same fabrication step are often highly correlated. In a system-level analysis, the performance parameters from circuit-level simulations are used as the inputs of a system-level simulator, and these circuit-level performance quantities usually depend on each other due to the network coupling and feedback. In photonic IC, spatial correlations may have to be considered for almost all components due to the small wavelength~\cite{pond2017predicting}. All these correlations are not guaranteed to be Gaussian, and they can not be handled by pre-processing techniques such as principal component analysis~\cite{chang2003statistical}. Karhunen-Loe\`ve theorem~\cite{bhardwaj2006modeling, sapatnekar2011overcoming} and Rosenblatt transformation \cite{rosenblatt1952remarks} may transform correlated parameters into uncorrelated ones, but they are error-prone and not scalable.

This paper develops new theory and algorithms of uncertainty quantification with non-Gaussian correlated process variations. Two main challenges arise when we quantify the impact of correlated non-Gaussian process variations. Firstly, we need to develop a new set of stochastic basis functions to capture the effects of non-Gaussian correlated process variations. Soize~\cite{soize2004physical} suggested to modify the generalized polynomial chaos, but the resulting non-polynomial basis functions are non-smooth and numerically unstable. Secondly, we need to develop a spectral method (either stochastic collocation or stochastic Galerkin) to compute the weights of the new basis functions. This requires performing a projection step by an accurate numerical integration in a multi-dimensional correlated parameter space. While the numerical integration in a one-dimensional space~\cite{Golub:1969} or a two-dimensional correlated square space~\cite{xu2018optimal} is well-studied, accurate numerical integration in a higher-dimensional correlated parameter space remains a challenge. During the preparation of this manuscript, the authors noticed some recent results on stochastic Galerkin \cite{paulson2017arbitrary, navarro2014polynomial} and sensitivity analysis for dependent random parameters~\cite{liu2018data}. However, the theoretical analysis and numerical implementation of stochastic collocation have not been investigated for systems with non-Gaussian correlated parameters.


\textbf{Main Contributions.} This paper presents a novel stochastic collocation approach for systems with correlated non-Gaussian uncertain parameters. Our main contributions include:
\begin{itemize}
\item The development of a set of basis functions that can capture the impact of non-Gaussian correlated process variations. Some numerical implementation techniques are also presented to speed up the computation.
\item An optimization-based quadrature rule to perform projection in a multi-dimensional correlated parameter space.  Previous stochastic spectral methods use id~\cite{nobile2008sparse} or Gauss quadrature~\cite{Golub:1969}, which is not applicable for non-Gaussian correlated cases. We reformulate the numerical quadrature problem as a nonlinear optimization problem, and apply a block coordinate descent method to solve it. Our approach can automatically determinate the number of quadrature samples. We also provide a theoretical analysis for the the upper and lower bounds of the number of quadrature samples required in our framework.

\item Theoretical error bound of our algorithm. We show that: (1) we can obtain the exact solution under some mild conditions when the stochastic solution is a polynomial function; (2) for a general smooth stochastic solution, an upper error bound exists for our stochastic collocation algorithm, and it depends on the distance of the unknown solution to a polynomial set as well as the numerical error of our optimization-based quadrature rule.
\item A set of numerical experiments on synthetic and realistic electronic and photonic IC examples. The results show the fast convergence rate of our method and its orders-of-magnitude (700$\times$ to 6000$\times$) speedup than Monte Carlo.
\end{itemize}

Before discussing about the technical details, we summarize some of the frequently used notations in Table~\ref{tab:notations}.

\begin{table}
\label{tab:notations}
\caption{Notations in this paper}
\begin{center}
\begin{tabular}{rl}
\hline
$d$ & number of random parameters describing process variations\\
$p$ & the highest total polynomial order\\
$M$ & number of quadrature nodes\\
$\vecpar$ &   a vector denoting $d$ uncertain parameters  \\
$\rho(\vecpar)$ & the joint probability density function of $\vecpar$\\
$\vecpar_k$ & the value of $\vecpar$ at a quadrature node \\
$w_k$ &  nonnegative weight associated with $\vecpar_k$ \\
$\mat{w}$ &  $d$-dimensional vector of $w_k$ \\
$\mat{e}_1$ & a vector of the form $[1,0,\ldots,0]^T$\\
$\basisInd$ & $d$-dimensional vector indicating order of a multivariate polynomial\\
$\multiGPC_{\basisInd}(\vecpar)$ & orthonormal basis functions with $\mathbb{E}[\multiGPC_{\basisInd}(\vecpar)\multiGPC_{\boldsymbol{\beta}}(\vecpar)]=\delta_{\basisInd\boldsymbol{\beta}}$ \\
$c_{\basisInd}$ & coefficient or weight of $\multiGPC_{\basisInd}(\vecpar)$ in the expansion\\
$\tilde c_{\basisInd}$ & approximation for $c_{\basisInd}$ by numerical integration\\
$\multiGPC_{j}(\vecpar)$ & $\multiGPC_{\basisInd}(\vecpar)$ in the graded lexicographic order\\
$c_j$ & coefficient or weight of $\multiGPC_{j}(\vecpar)$ in the expansion\\
$\tilde c_j$ & approximation for $c_j$ by numerical integration\\
$\out(\vecpar)$ & the unknown stochastic solution to be computed \\
$\tilde{\out}(\vecpar)$ & approximation of $\out(\vecpar)$ by our method\\
$\out_p(\vecpar)$ & the projection of $\out(\vecpar)$ onto  polynomial set $\mathcal{S}_p$ \\
$\mathcal{S}_p$ & the set of $d$-dimensional polynomials with total order $\le p$\\
$N_p$ & the number of  $d$-dimensional monomials with order $\le p$\\
\hline
\end{tabular}
\end{center}
\end{table}

\section{Preliminaries}

\subsection{Review of Stochastic Collocation}

Stochastic collocation \cite{xiu2005high, Ivo:2007, Nobile:2008, Nobile:2008_2} is the most popular non-intrusive stochastic spectral method. The key idea is to approximate the unknown stochastic solution as a linear combination of some specialized basis functions, and to compute the weights of all basis functions based on a post-processing step such as projection.
In order to implement the projection, one needs to do some device- or circuit-level simulations repeatedly for some parameter samples selected by a quadrature rule. Given a good set of basis functions and an accurate quadrature rule, stochastic collocation may obtain a highly accurate result with only a few repeated simulations and can achieve orders-of-magnitude speedup than Monte Carlo when the number of random parameters is small or medium.

Specifically, let $\vecpar=[\parm_1, \cdots, \parm_{\parNum}]^T \in \mathbb{R}^{\parNum}$ denotes a set of random parameters that describe some process variations. We aim to estimate the uncertainty of $\out (\vecpar)$, which is a parameter-dependent output of interest such as the power dissipation of a memory cell, the 3-dB band width of an amplifier or the frequency of an oscillator. In almost all chip design cases, we do not have a closed-form expression of $y(\vecpar)$, and we have to call a time-consuming device- or circuit-level simulator (which involves solving large-scale differential equations) to obtain the numerical value of $y(\vecpar)$ for each specified sample of $\vecpar$.
Stochastic spectral methods aim to approximate $\out(\vecpar)$ via
\begin{equation}
\label{eq:ygpc}
\out (\vecpar) \approx \yPC, \; {\rm with}\; \mathbb{E}\left[{\multiGPC}_{\basisInd}  (\vecpar)\multiGPC_{\boldsymbol{\beta }}\left( \vecpar \right)\right ]=\delta_{\basisInd, \boldsymbol{\beta }}.
\end{equation}
Here $\mathbb{E}$ denotes the expectation operator, $\delta$ denotes a Delta function, the basis functions $\{{\multiGPC}_{\basisInd} \left(\vecpar\right)\}$ are some orthonormal basis functions indexed by a vector $\basisInd=[\alpha_1,\cdots, \alpha_{\parNum}] \in \mathbb{N}^{\parNum}$. The total order of the basis function $|\basisInd|=\alpha_1+\ldots+\alpha_d$ is bounded by $p$, and thus the total number of basis functions is
\begin{equation}
N_p=\binom{p+d}{d}=(p+d)!/(p!d!).
\end{equation}
The coefficient $c_{\basisInd}$ can be obtained by a projection
\begin{equation}
\label{eq:yproject}
c_{\basisInd}= \mathbb{E}\left[ \out (\vecpar)  {\multiGPC}_{\basisInd} (\vecpar)\right]=
\int\limits_{\mathbb{R}^d}  {\out (\vecpar)  {\multiGPC}_{\basisInd} (\vecpar)\rho({\vecpar}) d\vecpar},
\end{equation}
where $\rho(\vecpar)$ is the joint probability density function.
The  integral in (\ref{eq:yproject}) needs to be evaluated with  numerical integration
\begin{equation}
\label{eq:yTP}
c_{\basisInd}\approx\sum\limits_{\parInd=1}^M {\out (\vecpar_{\parInd})  {\multiGPC}_{\basisInd}  (\vecpar_{\parInd})  w_{\parInd}  }.
\end{equation}
where $\{\vecpar_k\}_{k=1}^M$ are the quadrature nodes, and  $\{w_k\}_{k=1}^M$ are the corresponding quadrature weights. The key of stochastic collocation is to choose proper basis functions and an excellent quadrature rule, such that $M$ is as small as possible in \eqref{eq:yTP}.

\subsection{Existing Solutions for Independent Cases}

Most existing stochastic spectral methods assume that $\vecpar=[\xi_1,\ldots,\xi_d]^T$ are mutually independent. In this case, given the marginal density function $\rho_k(\xi_k)$ of each parameter, the joint density function is $\rho(\vecpar)=\Pi_{k=1}^d\rho_k(\xi_k)$.
Consequently, an excellent choice of basis functions is the generalized polynomial chaos~\cite{gPC2002}: the multivariate basis function is obtained as the product of some univariate polynomial basis functions
\begin{equation}\label{equ:basis_inde}
\multiGPC_{\basisInd}(\vecpar)=\phi_{1,\alpha_1}(\xi_1)\ldots\phi_{d,\alpha_d}(\xi_d).
\end{equation}
Here each univariate basis function $\phi_{k,\alpha_k}(\xi_k)$ can be constructed via the well-known three-term recurrence relation~\cite{gautschi1982generating}, and the univariate basis functions of the same parameter $\xi_k$ are mutually orthonormal with respect to the marginal density function $\rho_k(\xi_k)$. 

When $\vecpar$ are mutually independent, the quadrature points and weights in \eqref{eq:yTP} are often constructed via the tensor product of one-dimensional quadrature points and weights. Specifically, denote $\{\xi_{i_k}, w_{i_k}\}$ as the quadrature nodes and weights for the one-dimensional parameter $\xi_{k}$ (for instance, via Gaussian quadrature rule~\cite{Golub:1969}), then $\vecpar_{i_1\ldots i_d}=[\xi_{i_1},\ldots,\xi_{i_d}]^T$
and $w_{i_1\ldots i_d}=w_{i_1}\ldots w_{i_d}$ are the quadrature points and weights for a $d$-dimensional problem.
Another popular approach is the sparse grid technique \cite{nobile2008sparse},\ccf{\cite{Gerstner:1998, Hengliang:2007, sparse_grid:2000}}, which can significantly reduce the number of quadrature points by exploiting the nested structure of the quadrature points of different accuracy levels.


\subsection{Non-Gaussian Correlated Cases}
In general, $\vecpar$ can be non-Gaussian correlated, and the joint density $\rho(\vecpar)$ cannot be written as the product of the individual marginal density functions. As a result the multivariate basis function can not be obtained as in \eqref{equ:basis_inde}. It is also hard to choose a small number of quadrature nodes $\{\vecpar_{\parInd}\}$ and weights $\{w_{\parInd}\}$ that can produce highly accurate integration results.

In order to quantify the impact of non-Gaussian correlated uncertainties, Soize~\cite{soize2004physical} suggested a set of non-smooth orthonormal basis functions by modifying the generalized polynomial chaos~\cite{gPC2002}. The modified basis functions were employed in~\cite{twweng:optsEx} for the variability analysis of silicon photonic devices. However, the algorithm does not converge well due to the numerical instability of the basis functions, and
designers cannot easily extract statistical information (e.g.,
mean value and variance) from the obtained solution. In the applied math community, multivariate orthogonal
polynomials may be constructed via the multivariate three-term recurrence~\cite{xu1993multivariate,barrio2010three}. However, the theories in~\cite{xu1993multivariate, barrio2010three} either are hard to implement or can only guarantee weak orthogonality.

\begin{figure*}[t]
	\centering
		\includegraphics[width=5.0in]{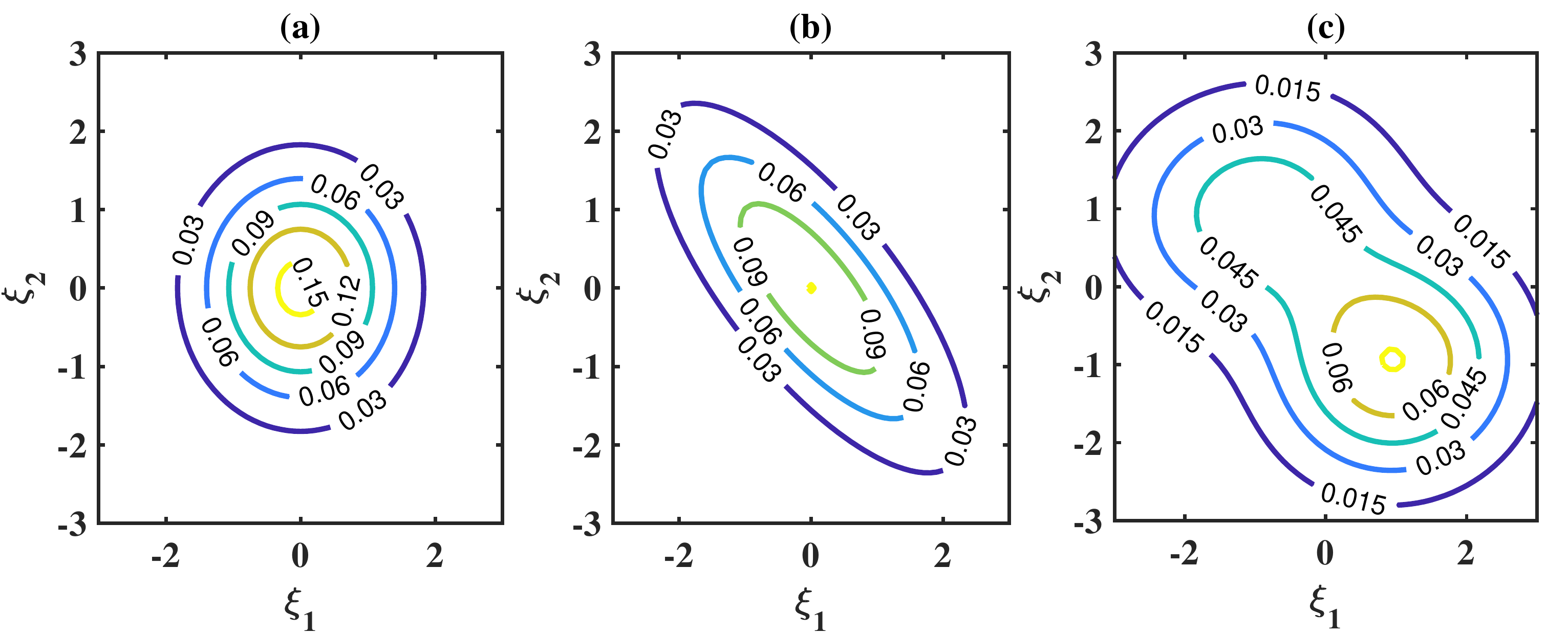}
\caption{Several joint density functions. (a) independent Gaussian; (b) correlated Gaussian; (c) correlated non-Gaussian (e.g., a Gaussian-mixture distribution).}
	\label{fig:gmdistribution}
\end{figure*}

 \section{Proposed Orthonormal Basis Functions}
 \label{sec:basisfunc}
This section presents a set of smooth orthonormal basis functions that can capture the impact of non-Gaussian correlated random parameters. The proposed basis functions allow us to approximate a smooth $y(\vecpar)$ with a high accuracy and to extract its statistical moments analytically or semi-analytically.

\subsection{Generating Multivariate Orthonormal Polynomials}
We adopt a Gram-Schmidt approach to calculate the basis functions recursively. The Gram-Schmidt method was used for vector orthogonalization in the Euclidean space~\cite{golub2012matrix}.
It can also be generalized to construct some orthogonal polynomial functions. The key difference here is to replace the vector inner product with the functional expectations.


Specifically, we first reorder the monomials $\vecpar^{\basisInd}=\xi_1^{\alpha_1}\ldots \xi_d^{\alpha_d}$ in the  graded lexicographic order, and denote them as $\{p_j(\vecpar)\}_{j=1}^{N_p}$. For instance, when $d=2$ and $p=2$, there is
 \begin{equation*} \{p_j(\xi_1,\xi_2)\}_{j=1}^6=\{1,\xi_1,\xi_2,\xi_1^2,\xi_1\xi_2,\xi_2^2\}.
 \end{equation*}
Then we set $\multiGPC_1(\vecpar)   = 1$ and generate orthonormal polynomials $\{\multiGPC_j(\vecpar)\}_{j=1}^{N_p}$ in the correlated parameter space recursively by
 \begin{align} &\hat{\multiGPC}_j(\vecpar) = p_j(\vecpar)-\sum_{i=1}^{j-1} \mathbb{E}[ p_j(\vecpar)\multiGPC_i(\vecpar)] \multiGPC_i(\vecpar),
 \\
 &\multiGPC_j(\vecpar)  = \frac{\hat{\multiGPC}_j(\vecpar)}{\sqrt{\mathbb{E}[\hat{\multiGPC}^2_j(\vecpar)]}},\ j=2,\ldots,N_p.\label{equ:normal}
 \end{align}
 The basis functions defined by this approach are unique under the specific order of  monomials. If the ordering of monomials are changed, one can get another set of basis functions. Since the basis functions are orthonormal polynomials, we can easily extract the mean value and statistical moment of an approximated stochastic solution.

Note that recently we have also proposed a set of orthogonal polynomial basis function via a Cholesky decomposition~\cite{Cui2018}. The method in~\cite{Cui2018} is easy to implement and suitable for high-dimensional cases, but the resulting basis functions can be occasionally inaccurate due to the numerical instability of the Cholesky factorization on a large ill-conditioned covariance matrix. This paper focuses on the fundamental problems of stochastic collocation for correlated cases, therefore, we generate basis functions via the Gram-Schmidt method.

\subsection{Numerical Implementation Issues}
The main challenge in the basis function generation is to compute the expectations in a correlated parameter space, which involves evaluating the moments $\mathbb{E}[\vecpar^{\basisInd}]$ up to order $2p$. Some techniques can be used to speed up the computation.

In practice, the process variations are generally described by a set of measurement data from testing chips, and their joint density function $\rho(\vecpar)$ is fitted using some density estimators. A widely used model is the Gaussian mixture:
 \begin{equation}
 \rho(\vecpar) =\sum_{i=1}^n r_i \mathcal{N}(\vecpar | \muvec_i, \Sigmamat_i), \; {\rm with}\; r_i>0 , \; \sum_{i=1}^n r_i=1.
 \end{equation}
Here $\mathcal{N}(\vecpar | \muvec_i, \Sigmamat_i)$ denotes a multi-variate Gaussian distribution with mean $\muvec_i \in\mathbb{R}^d$ and a covariance matrix $\Sigmamat_i \in \mathbb{R}^{d \times d}$. Fig.~\ref{fig:gmdistribution} compares the Gaussian mixture model with independent and correlated Gaussian distributions. With a Gaussian mixture, the moments can be computed accurately using a functional tensor train approach (see Section 3.3 of~\cite{Cui2018}).

For general cases, one may estimate the moments by changing the variables and density function:
\begin{equation}
\mathbb{E}[\vecpar^{\basisInd}]=\int\limits_{\mathbb{R}^d}  {g_{\basisInd} ({\boldsymbol{\eta}})\hat{\rho}({\boldsymbol{\eta}}) d {\boldsymbol{\eta}} }, \; {\rm with}\; g_{\basisInd} ({\boldsymbol{\eta}})=\frac{{\boldsymbol{\eta}}^{\basisInd}\rho({\boldsymbol{\eta}})}{\hat{\rho}({\boldsymbol{\eta}})}.
\end{equation}
Here $\hat{\rho}({\boldsymbol{\eta}})$ denotes the joint density function of independent random parameters ${\boldsymbol{\eta}} \in \mathbb{R}^d$. Then, standard quadrature methods such as sparse grid~\cite{nobile2008sparse} or tensor-product Gauss quadrature can be used to evaluate the integration. The tensor-train-based method in~\cite{zzhang:huq_tcad} can be used to reduce the integration cost when $d$ is large. The potential limitation is that it may be non-trivial to obtain highly accurate results if $g_{\basisInd} ({\boldsymbol{\eta}})$ is highly nonlinear or even non-smooth. Note that we only need to use a high-order quadrature rule in an independent parameter space and repeatedly evaluate some cheap closed-form functions here, and we do not need to perform expensive device or circuit simulations when we compute the basis functions. 

In this paper, we use Gaussian mixture models to describe non-Gaussian correlated uncertainties, and we employ the functional tensor-train method~\cite{Cui2018} for moment computation.


 \section{Optimization-Based Quadrature}

After constructing the basis functions, we still need to choose a small number of the quadrature nodes and weights in order to calculate $c_{\basisInd}$ by (\ref{eq:yTP}) with a small number of device- or circuit-level simulations.
Motivated by \cite{ryu2015extensions, keshavarzzadeh2018numerical}, we present an optimization model to decide a proper quadrature rule. Our method differs from \cite{ryu2015extensions, keshavarzzadeh2018numerical} in both algorithm framework and theoretical analysis. Firstly, while~\cite{ryu2015extensions} only updates the quadrature weights by linear programing, we optimize the quadrature samples and weights by nonlinear optimization. Secondly, our optimization setup differs from that in~\cite{keshavarzzadeh2018numerical}: we minimize the integration error of our proposed multivariate orthonormal basis functions, such that the resulting quadrature rule is suitable for quantifying the impact of non-Gaussian correlated uncertainties. Thirdly, we handle the nonnegative constraint of the weight $\mat{w}$ and the nonlinear objective function of $\bar \vecpar$ separately via a block coordinate descent approach.
Fourthly, we propose a novel initializing method via weighted complete linkage clustering. Finally, we present theoretical results regarding the algorithm complexity and error bound.
Our method is summarized in Algorithm~\ref{alg:update_coeff}, and we elaborate the key ideas below.
\begin{algorithm}[t]
\label{alg:update_coeff}
\caption{Proposed stochastic collocation method}
      \SetKwInput{Input}{Input}
      \SetKwInput{Output}{Output}
 \begin{itemize}
  \item[Step 1] Initialize the quadrature nodes and weights via Algorithm~\ref{alg:cluster}.  
 \item[Step 2] \textbf{Increase phase.} Update the quadrature nodes and weights by solving~\eqref{equ:NLS}. If Alg.~\ref{alg:BCD} fails to converge, increase the node number and go back to Step~1.
 \item[Step 3] \textbf{Decrease phase.} Decrease the number of nodes, and update them by solving~\eqref{equ:NLS}. Repeat Step~3 until no points can be deleted \ccf{[in other words, the objective function of \eqref{equ:NLS} fails to reduce below a prescribed threshold].} Return the nodes and weights.
 \item[Step 4] Call a deterministic simulator to compute $\{\out(\vecpar_k)\}_{k=1}^M$. Then compute the coefficients $\{c_{\basisInd}\}$ via (\ref{eq:yTP}).
 \end{itemize}
 \Output{The coefficients $\{c_{\basisInd}\}$ in (\ref{eq:ygpc}).}
\end{algorithm}

 \subsection{Optimization Model of Our Quadrature Rule}
 \label{subsec:BCD}


Our idea is to compute a set of quadrature points and weights that can accurately estimate the numerical integration of some testing functions. \ccf{{Given} a joint density function $\rho(\vecpar)$}, we seek for the quadrature nodes and weights $\{\vecpar_k, w_k\}_{k=1}^M$ by matching the integration of basis functions up to order $2p$:
\begin{multline}
\label{equ:nmint1}
\mathbb{E}[\multiGPC_{j}(\vecpar)]=\int\limits_{\mathbb{R}^d}\multiGPC_{j}(\vecpar) \rho(\vecpar)d\vecpar=\sum_{k=1}^{M}\multiGPC_j(\vecpar_k)w_k, \\
\forall j=1,\ldots,N_{2p}.
\end{multline}
Here, $N_{2p}=\binom{2p+d}{d}$ denotes the total number of basis functions with their total order bounded by $2p$.

We choose the above testing functions based on two reasons. Firstly, it is easy to show that $\mathbb{E}[\multiGPC_{j}(\vecpar)]=\mathbb{E}[\multiGPC_{j}(\vecpar)\multiGPC_1(\vecpar)] =\delta_{1j}$. Secondly, we can show that for any polynomial function $f(\vecpar)$ bounded by order $2p$, the integration of $f(\vecpar)$ weighted by the density function $\rho(\vecpar)$ (i.e., $\mathbb{E}\left[f(\vecpar)\right ]$) can be written as the weighted sum of $\mathbb{E}[\multiGPC_{j}(\vecpar)]$'s, and therefore one can get the exact integration result if \eqref{equ:nmint1} holds. In stochastic collocation, if $\out (\vecpar)$ is a polynomial function bounded by order $p$, then $c_{\basisInd}=\mathbb{E} \left[ \out (\vecpar)  {\multiGPC}_{\basisInd} (\vecpar)\right]$ can be accurately computed for {\it every} basis function with $|\boldsymbol{\alpha}| \leq p$ if \eqref{equ:nmint1} holds. The detailed derivations are given in Theorem \ref{thm:exactrecovery} of Section~\ref{sec:theory}.


In practice, we propose to rewrite \eqref{equ:nmint1} as the following nonlinear least-square problem
 \begin{equation}\label{equ:NLS}
\min_{\bar{\vecpar},\mat{w}\ge0}\quad \|\mat{\Phi}(\bar{\vecpar})\mat{w}-\mat{e_1}\|_2^2,
\end{equation}
where $\bar{\vecpar}=[\vecpar_1^T,\ldots,\vecpar_M^T]^T\in\mathbb{R}^{Md}$, $\mat{w}=[w_1,\ldots,w_M]^T\in\mathbb{R}^{M}$, $\mat{e}_1=[1,0,\ldots,0]^T\in\mathbb{R}^{N_{2p}}$,  $\mat{\Phi}(\bar{\vecpar})$ is a matrix of size $N_{2p} \times M$ with the $(j,k)$-th element being $(\mat{\Phi}(\bar{\vecpar}))_{jk}=\multiGPC_j(\vecpar_k)$, \ccf{$\|\cdot\|_2$ denotes the Euclidean norm.}
Here, we also require the quadrature weights  to be nonnegative.
This requirement is an natural extension of the one-dimensional Gauss quadrature rule \cite{Golub:1969}, and it can help our theoretical analysis in Section~\ref{sec:theory}.

\subsection{A Block Coordinate Descent Solver for \eqref{equ:NLS}}

The total number of unknowns in \eqref{equ:NLS} is $M(d+1)$, which becomes large as $d$ increases. In order to improve the scalability of our algorithm, we solve \eqref{equ:NLS} by a block coordinate descent method. The idea is to update the parameters block-by-block: at the $t$-th  iteration, we firstly fix $\bar{\vecpar}^{t-1}$ and solve a $\mat{w}$-subproblem to update $\mat{w}^{t}$, then fix $\mat{w}^{t}$ and solve a $\vecpar$-subproblem to update $\bar{\vecpar}^{t}$.

 \textbf{$\mat{w}$-subproblem.} Suppose $\bar{\vecpar}^{t-1}=[\vecpar_1^{t-1};\ldots;\vecpar_M^{t-1}]$ is fixed, then (\ref{equ:NLS}) reduces to a convex linear least-square problem
  \begin{equation}\label{equ:NLSw}
\mat{w}^{t}=\arg\min_{\mat{w}\ge0}\quad \|\mat{\Phi}(\bar{\vecpar}^{t-1}) \mat{w}-\mat{e}_1\|_2^2.
 \end{equation}

 \textbf{$\vecpar$-subproblem.} When $\mat{w}^{t}$ is fixed, we apply the Gaussian Newton method to update the quadrature samples:
 \begin{equation}\label{equ:NLSd}
 \vecpar_k^{t}=\vecpar_k^{t-1}+\mat{d}_k^t, \text{ with } \{\mat{d}_k^t\}=\arg\min_{\{\mat{d}_k\}} \ \|\sum_{k=1}^M\mat{G}_k^t\mat{d}_k + \mat{r}^t\|_2^2.
 \end{equation}
 Here,  $\mat{r}^t = \mat{\Phi}(\bar{\vecpar}^{t-1})\mat{w}^{t}-\mat{e}_1 \in \mathbb{R}^{N_{2p}}$ denotes the residual, $\mat{G}_k^t\in \mathbb{R}^{N_{2p}\times d}$ is the Jacobian matrix of $\mat{r}^t$ with respect to $\vecpar_k^{t-1}$.
\ccf{ In practice, we run the step in (\ref{equ:NLSd}) once and go back to the $\mat{w}$-step. This is actually the inexact block coordinate approach~\cite{tappenden2016inexact}.}
The pseudo codes of our block coordinate descent solver are summarized in Algorithm~\ref{alg:BCD}. \ccf{ Here we use an $\ell _1$-norm in the stopping criteria since it enables us to bound the error of our whole framework in Section~\ref{sec:theory}.}

\ccf{
We note that some other approaches can also solve the non-convex optimization problem \eqref{equ:NLS}. When the number of unknown variables is small, we can obtain a globally optimal solution via the polynomial optimization solver based on a semi-definite positive relaxation~\cite{lasserre2001global}. The Levenberg-Marquardt approach or the trust region algorithm \cite{nocedal2006numerical} can also be used to solve the $\vecpar$-subproblem, but they are more expensive than our solver. Our optimization solver converges very well in practice. As will be shown in Section~\ref{sec:theory}, our stochastic collocation framework actually does not necessarily require a locally or globally optimal solution of~\eqref{equ:NLS} at all. Instead, it only requires the objective function to be sufficiently small at the obtained quadrature samples and weights.}

 \begin{algorithm}[t]
\label{alg:BCD}
\caption{Block coordinate descent solver for \eqref{equ:NLS}}
      \SetKwInput{Input}{Input}
      \SetKwInput{Output}{Output}
\Input{Initial quadrature nodes $\vecpar_1,\ldots,\vecpar_M$, the maximal iteration $n_{\max}$, and the tolerance $\epsilon$.}
\For{$t=1,\ldots,n_{\max}$}
{
Update the weights $\mat{w}^t$ via solving (\ref{equ:NLSw});\\
Update the nodes  $\bar{\vecpar}^t$ via solving (\ref{equ:NLSd});\\
\If{$ \|\mat{\Phi}(\bar{\vecpar}^t)\mat{w}^t-\mat{e_1}\|_1\le \epsilon$ is satisfied}{break;}
 }
\Output{Optimal nodes and weights $ \{ \vecpar_k,w_k\}_{k=1}^M$.}
\end{algorithm}

\subsection{Initializing Quadrature Nodes and Weights}
 \label{section:initialNodes}

The nonlinear least square problem \eqref{equ:NLS} is non-convex, and generally it is hard to obtain the global optimal solution. In practice, accurate results can be obtained once we can use good initial guesses for the quadrature nodes and weights.

In Step 3 of Algorithm \ref{alg:update_coeff}, we need to find a quadrature rule with fewer nodes after some pairs of quadrature samples and weights have already been calculated. In this case, we can simply delete one node with the smallest weight, and choose all other samples and their corresponding weights as the initial condition for the subsequent optimization problem.

In Step 1 of Algorithm \ref{alg:update_coeff}, we need to generate some initial nodes from scratch. We firstly generate $M_0 \gg M$ nodes via Monte Carlo. In Monte Carlo sampling, all samples have the same weights $1/M_0$. In order to improve the convergence, we keep all samples unchanged but refine their weights by solving the w-subproblem in (\ref{equ:NLSw}).
These $M_0$ initial nodes are then grouped into $M$ clusters, and the resulting cluster centers are set as the initial samples for whole nonlinear least-square optimization problem. \ccf{ This choice of initial guess proves to work very well in practice, because Monte Carlo itself is an integration rule with statistical accuracy guarantees. }

 \begin{algorithm}[t]
\label{alg:cluster}
\caption{Weighted complete linkage clustering}
      \SetKwInput{Input}{Input}
      \SetKwInput{Output}{Output}
\Input{The number of cluster $M$, and $M_0=3M$ initial   nodes $\vecpar_1,\ldots,\vecpar_{M_0}$.}
Calculate the  weights for $\vecpar_1,\ldots,\vecpar_{M_0}$ by solving
\eqref{equ:NLSw}.

\For{$m= M_0,\ldots,M+1$}
{
Update the distance matrix by (\ref{equ:dist}).\\
Find two clusters with the minimal  distance, and  merge them into one single cluster.
 }

 Calculate the cluster centers
 and weights via \eqref{equ:clutercenter}.

\Output{Clustered nodes and weights $ \{ \vecpar_k,w_k\}_{k=1}^M$.}
\end{algorithm}

 Clustering is a classical technique in pattern recognition and data mining~\cite{jain1999data}, and it gathers data with similar pattern into one group. A widely used algorithm is hierarchical clustering. At the beginning, each single data point is a cluster by its own, then two clusters with ``the minimal distance'' are merged into one single cluster sequentially. Consequently, the number of clusters is decreased by one in each iteration until the prescribed number of clusters is reached.
The widely used hierarchical approaches includes single linkage, complete linkage and average linkage.
They mainly differ in the criterion of choosing ``the  distance''. The complete-linkage clustering chooses the distance between two clusters $C_i$ and $C_j$ as
\begin{equation*}
D^0_{ij}=\max_{\vecpar_1\in C_i, \vecpar_2 \in C_j} d(\vecpar_1,\vecpar_2),
\end{equation*}
where $d(\vecpar_1, \vecpar_2)=\|\vecpar_1- \vecpar_2\|_2$.
In our problem, the sample points are equipped with some weight parameters, therefore, we modify the complete-linkage clustering and consider a weighted clustering problem.

 \textbf{Weighted Complete Linkage Clustering.} We define the weighted distance as
\begin{equation}\label{equ:dist}
D_{ij}=(w_i+w_j)\left(\max_{\vecpar_1\in C_i, \vecpar_2 \in C_j} d(\vecpar_1,\vecpar_j)\right),
\end{equation}
where $w_i=\sum_{\vecpar_k\in C_i} w(\vecpar_k)$
is the weight of the $i$-th cluster.
The above distance considers both the geometric distance and the weights of different clusters. The intuition behind \eqref{equ:dist} is that we do not want a sample with a very small weight to form a cluster by itself. This algorithm tends to group a sample with a very small weight with its nearest cluster.

Once the number of clusters reduces to $M$, we stop the iterations and return the weight and cluster center as
\begin{equation}\label{equ:clutercenter}
w_i=\sum_{\vecpar_k\in\mat{C}_i}w(\vecpar_k), \;  \vecpar_i=\sum_{\vecpar_k\in\mat{C}_i} \frac{w(\vecpar_k)}{w_i}\vecpar_k,\; \forall\, i=1,\ldots,M.
\end{equation}

Algorithm~\ref{alg:cluster} has summarized the pseudo codes of our clustering method used to initialize Algorithm~\ref{alg:update_coeff}.

\subsection{Number of Quadrature Points}
A fundamental question is: how many quadrature samples are necessary in order to achieve a desired level of accuracy? This question is well answered in the one-dimensional Gauss quadrature rule: $p$ quadrature points provide an exact result for the numerical integration of any polynomial function bounded by order $2p-1$~\cite{Golub:1969}. However, there is no similar result for general multidimensional correlated cases.

Let ${\cal S}_{2p}$ denote all polynomial functions of $\vecpar$ with their total orders bounded by $2p$. The integration rule $\{ \vecpar_k, w_k\}_{k=1}^M$ has a $2p$-th-order accuracy if \eqref{equ:nmint1} is satisfied. Here the $2p$-th-order accuracy means that $\sum \limits_{k=1}^M f(\vecpar_k)w_k =\mathbb{E} [ f(\vecpar)]$ for {\it any} $f(\vecpar) \in {\cal S}_{2p}$. We have the following result on the number of quadrature samples in order to ensure the $2p$-th-order accuracy.

\begin{theorem}\label{thm:Mbound}
Assume that $M$ pairs of quadrature samples and weights are obtained from \eqref{equ:nmint1} to ensure the $2p$-th-order integration accuracy, then the number of quadrature points satisfies $N_p \leq M \leq N_{2p}$.
\end{theorem}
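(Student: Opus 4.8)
The plan is to prove the two inequalities by two independent arguments: a rank (Gram-matrix) argument for the lower bound $N_p \le M$, and a Carath\'eodory-type reduction for the upper bound $M \le N_{2p}$. Throughout I use the fact, already noted before the theorem, that satisfying \eqref{equ:nmint1} for $j=1,\ldots,N_{2p}$ is equivalent to $2p$-th-order accuracy, because $\{\multiGPC_j\}_{j=1}^{N_{2p}}$ is a basis of $\mathcal{S}_{2p}$ (it consists of $N_{2p}=\dim\mathcal{S}_{2p}$ linearly independent polynomials of total order at most $2p$).

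For the lower bound I would exploit exactness on products of low-order basis functions. Collect the first $N_p$ basis functions into $\mat{V}\in\mathbb{R}^{M\times N_p}$ with $(\mat{V})_{kj}=\multiGPC_j(\vecpar_k)$, and let $\mat{W}=\mathrm{diag}(w_1,\ldots,w_M)$ be the nonnegative weight matrix. For any $i,j\le N_p$, the product $\multiGPC_i\multiGPC_j$ has total order at most $2p$, hence lies in $\mathcal{S}_{2p}$, so the rule integrates it exactly: $\sum_{k=1}^M \multiGPC_i(\vecpar_k)\multiGPC_j(\vecpar_k)w_k=\mathbb{E}[\multiGPC_i\multiGPC_j]=\delta_{ij}$. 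In matrix form this reads $\mat{V}^T\mat{W}\mat{V}=\mat{I}_{N_p}$. Since the weights are nonnegative I may factor $\mat{W}=\mat{W}^{1/2}\mat{W}^{1/2}$, giving $\mat{V}^T\mat{W}\mat{V}=(\mat{W}^{1/2}\mat{V})^T(\mat{W}^{1/2}\mat{V})$, whose rank is at most $\min(M,N_p)$. As the right-hand side $\mat{I}_{N_p}$ has rank $N_p$, I conclude $N_p\le\min(M,N_p)$, i.e. $M\ge N_p$. Note that nonnegativity of the weights is exactly what legitimizes the square-root factorization, so it enters here essentially.

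For the upper bound I would read \eqref{equ:nmint1} geometrically. Stacking all $N_{2p}$ basis functions into $\boldsymbol{\phi}(\vecpar)=(\multiGPC_1(\vecpar),\ldots,\multiGPC_{N_{2p}}(\vecpar))^T\in\mathbb{R}^{N_{2p}}$, the constraints say precisely that $\mat{e}_1=\sum_{k=1}^M w_k\,\boldsymbol{\phi}(\vecpar_k)$ with every $w_k\ge0$; that is, $\mat{e}_1$ lies in the convex cone generated by the $M$ vectors $\{\boldsymbol{\phi}(\vecpar_k)\}$ in $\mathbb{R}^{N_{2p}}$. By Carath\'eodory's theorem for cones, any point of such a cone is a nonnegative combination of at most $N_{2p}$ generators. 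Concretely, if $M>N_{2p}$ the vectors $\{\boldsymbol{\phi}(\vecpar_k)\}$ are linearly dependent, so there is a nonzero $\boldsymbol{\lambda}$ with $\sum_k\lambda_k\boldsymbol{\phi}(\vecpar_k)=\mat{0}$; replacing $w_k$ by $w_k-t\lambda_k$ keeps $\sum_k w_k\boldsymbol{\phi}(\vecpar_k)=\mat{e}_1$, and moving $t$ in the appropriate direction until the first weight reaches zero drives at least one weight to zero while keeping all of them nonnegative. Deleting that node yields a strictly smaller rule that still satisfies \eqref{equ:nmint1}, hence retains $2p$-th-order accuracy. Iterating, a rule that can no longer be reduced---which is exactly what the decrease phase of Algorithm~\ref{alg:update_coeff} returns---must satisfy $M\le N_{2p}$.

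The main obstacle is the upper bound. The lower bound is a clean linear-algebra fact once one observes the exactness on products. The upper bound, by contrast, is a statement about the \emph{minimal} number of nodes rather than about an arbitrary rule satisfying \eqref{equ:nmint1} (such a rule may of course carry arbitrarily many nodes), so the care lies in the reduction step: verifying that the Carath\'eodory perturbation simultaneously preserves nonnegativity of the weights and the full moment system, and in tying the algorithmic condition ``no node can be deleted'' to the irreducibility the bound requires. This argument is essentially the elementary constructive proof of Tchakaloff's theorem specialized to our orthonormal basis.
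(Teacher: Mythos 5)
Your proposal is correct and follows essentially the same route as the paper: the lower bound via the Gram identity $\mat{V}^T\mat{W}\mat{V}=\mat{I}_{N_p}$ and a rank count, and the upper bound via Carath\'eodory's theorem (you invoke the conic version directly on $\mat{e}_1=\sum_k w_k\boldsymbol{\phi}(\vecpar_k)$, while the paper normalizes with $\sum_k w_k=1$ and applies the convex-hull version in $\mathbb{R}^{N_{2p}-1}$, yielding the same bound $N_{2p}$). Your explicit weight-reduction step and the remark that the upper bound is existential (a reducible rule may have arbitrarily many nodes) match the paper's own reading of the statement.
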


\begin{proof}
See Appendix \ref{append:Mbound} for the details.
\end{proof}

\ccf{ While there exists at least one $M$ in $[N_p,N_{2p}]$ such that the $2p$-th-order integration accuracy can be achieved, we can have multiple choices of $M$, and we may even have multiple choices of quadrature samples and weights for each $M$. In our stochastic collocation framework, we only require one (among possibly multiple) set of quadrature samples and weights with a sufficiently small $M$. }

In practice, we try to get a better solution by generating a better initial guess. We do this by firstly generate $6N_p$ random samples via Monte Carlo, and group them into $2N_p$ clusters. These $M=2N_p$ samples are used as the initial quadrature points. Then, we increase or decrease $M$ via Algorithm~\ref{alg:update_coeff}. This process is illustrated via a 2-dimensional example in Fig~\ref{fig_quadrature_synth}. The practical number of quadrature nodes used by our stochastic collocation framework is very close to the theoretical lower bound, which is experimentally shown in Section~\ref{sec:result_num}.



\section{Theoretical Error Bounds}
\label{sec:theory}

In this section, we provide several theoretical results regarding the numerical accuracy of our proposed stochastic collocation algorithm for non-Gaussian correlated cases.

\subsection{Conditions for Exact Results}
The following theorem show that our quadrature rule \eqref{equ:nmint1} can provide exact results if $y(\vecpar)$ satisfies certain conditions.

\begin{theorem}
Suppose that $y(\vecpar)\in\mathcal{S}_p$ is a polynomial function bounded by order $p$, i.e., there exist some coefficients $\{c_{\basisInd}\}$ such that $
y(\vecpar)=\sum_{|\basisInd|=0}^p c_{\basisInd} \multiGPC_{\basisInd}(\vecpar)$.
Denote the approximated expansion obtained via our numerical integration as
\begin{equation}\label{equ:yapp}
\tilde{y}(\vecpar)=\sum_{|\basisInd|=0}^p \tilde c_{\basisInd} \multiGPC_{\basisInd}(\vecpar), \text{ with } \tilde c_{\basisInd}=\sum_{k=1}^M y(\vecpar_k)\multiGPC_{\basisInd} (\vecpar_k)w_k.
\end{equation}
Then $y(\vecpar)$ can be recovered exactly, i.e., $y(\vecpar)=\tilde y(\vecpar)$, if $\{\vecpar_k, w_k\}$ satisfies \eqref{equ:nmint1} strictly for all $j=1,\ldots,N_{2p}$.
\label{thm:exactrecovery}
\end{theorem}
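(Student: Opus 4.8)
The plan is to reduce the whole statement to a single coefficient identity: I would show that $\tilde c_{\basisInd}=c_{\basisInd}$ for every $\basisInd$ with $|\basisInd|\le p$, after which $\tilde y\equiv y$ follows immediately by comparing the two expansions term-by-term. Since the basis functions are orthonormal, the exact coefficient is the projection $c_{\basisInd}=\mathbb{E}[y(\vecpar)\multiGPC_{\basisInd}(\vecpar)]$, while by definition $\tilde c_{\basisInd}=\sum_{k=1}^M y(\vecpar_k)\multiGPC_{\basisInd}(\vecpar_k)w_k$ is precisely the quadrature approximation of that same integral. Hence the entire theorem hinges on showing that the rule $\{\vecpar_k,w_k\}$ integrates each integrand $y(\vecpar)\multiGPC_{\basisInd}(\vecpar)$ exactly.

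The key structural observation is a degree count: because both $y$ and $\multiGPC_{\basisInd}$ have total degree at most $p$, their product $y(\vecpar)\multiGPC_{\basisInd}(\vecpar)$ has total degree at most $2p$, so it lies in $\mathcal{S}_{2p}$. I would therefore first prove the intermediate fact that, under hypothesis \eqref{equ:nmint1}, the rule is exact on all of $\mathcal{S}_{2p}$, i.e.\ $\sum_{k=1}^M f(\vecpar_k)w_k=\mathbb{E}[f(\vecpar)]$ for every $f\in\mathcal{S}_{2p}$ (this is the $2p$-th-order accuracy defined in the text). The argument is pure linearity: the Gram--Schmidt recursion constructs $\{\multiGPC_j\}_{j=1}^{N_{2p}}$ from the monomials $\{p_j\}_{j=1}^{N_{2p}}$, so these orthonormal polynomials span the same space $\mathcal{S}_{2p}$; thus any $f\in\mathcal{S}_{2p}$ can be expanded as $f=\sum_{j=1}^{N_{2p}}a_j\multiGPC_j$. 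Then $\mathbb{E}[f]=\sum_j a_j\,\mathbb{E}[\multiGPC_j]$ and $\sum_k f(\vecpar_k)w_k=\sum_j a_j\sum_k\multiGPC_j(\vecpar_k)w_k$ agree term-by-term, since \eqref{equ:nmint1} equates $\mathbb{E}[\multiGPC_j]$ with $\sum_k\multiGPC_j(\vecpar_k)w_k$ for each $j\le N_{2p}$.

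With this exactness lemma in hand, I would apply it to $f=y\cdot\multiGPC_{\basisInd}\in\mathcal{S}_{2p}$, which directly yields $\tilde c_{\basisInd}=\sum_k y(\vecpar_k)\multiGPC_{\basisInd}(\vecpar_k)w_k=\mathbb{E}[y(\vecpar)\multiGPC_{\basisInd}(\vecpar)]=c_{\basisInd}$ for every $|\basisInd|\le p$. Substituting back into the definition of $\tilde y$ in \eqref{equ:yapp} gives $\tilde y=\sum_{|\basisInd|\le p}c_{\basisInd}\multiGPC_{\basisInd}=y$, completing the argument.

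I do not expect a serious obstacle; the proof is essentially linearity combined with two degree-counting facts. The one point that deserves explicit care is verifying that $\{\multiGPC_j\}_{j=1}^{N_{2p}}$ genuinely spans \emph{all} of $\mathcal{S}_{2p}$, so that no degree-$\le 2p$ polynomial escapes the exactness guarantee; this is exactly what the triangular Gram--Schmidt recursion ensures, since each $\multiGPC_j$ differs from $p_j$ only by a combination of lower-indexed basis functions. The second point worth stating is that the $N_{2p}$ matching conditions in \eqref{equ:nmint1} are precisely what is needed, no fewer: had we only matched basis functions up to order $p$, the product $y\cdot\multiGPC_{\basisInd}$ of degree $2p$ would not be controlled, which is the whole reason the testing set is pushed up to order $2p$.
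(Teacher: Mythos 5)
Your proposal is correct and follows essentially the same route as the paper's proof in Appendix~B: both arguments reduce the theorem to $\tilde c_{\basisInd}=c_{\basisInd}$ and justify this by the exactness of the quadrature rule on $\mathcal{S}_{2p}$ (linearity plus the fact that the product of two order-$p$ polynomials has order $\le 2p$). The only cosmetic difference is that the paper routes through the discrete orthonormality condition $\sum_k \multiGPC_{\boldsymbol{\beta}}(\vecpar_k)\multiGPC_{\basisInd}(\vecpar_k)w_k=\delta_{\basisInd,\boldsymbol{\beta}}$ while you apply the exactness directly to the integrand $y(\vecpar)\multiGPC_{\basisInd}(\vecpar)$; the underlying idea is identical.
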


\begin{proof}
The detailed proof is provided in Appendix~\ref{app:lem1}.
\end{proof}

In practice, we may not be able to get an exact solution because of two reasons: (1) $y(\vecpar)$ is not a polynomial in ${\cal S}_p$; (2) the quadrature points and weights obtained by our numerical nonlinear optimization solver causes a small residual in \eqref{equ:nmint1}. In this case, we can provide an error bound for our solution when $y(\vecpar)$ is smooth enough and when the nonlinear optimization problem \eqref{equ:NLS} is solved with certain accuracy (i.e., when the resulting objective function is below a threshold).



\subsection{Three Weak Assumptions}

In order to provide a theoretical analysis for the numerical error caused by $y(\vecpar)$ and by the nonlinear optimization solver, we make the following weak assumptions.

\textbf{Assumption 1.} $y(\vecpar)$ is squared integrable. In other words, there exists a positive scalar $L$ such that
\begin{equation}\label{equ:bdy}
\|y(\vecpar)\|_2=\sqrt{\mathbb{E}[y^2(\vecpar)]}\le L.
\end{equation}
Denote $y_p(\vecpar)=\arg\min_{\hat{y}(\vecpar)\in\ten{S}_p}\|y(\vecpar)-\hat y(\vecpar)\|_2$ as the projection of $y(\vecpar)$ onto $\ten{S}_p$.
We assume that  there exists $\delta\ge0$ such that
\begin{equation}\label{equ:apperr}
\|y(\vecpar) - y_p(\vecpar)\|_2\le \delta.
\end{equation}
Actually $y_p(\vecpar)$ can be written as $y_p(\vecpar)=\sum_{|\basisInd|=0}^p c_{\basisInd}\multiGPC_{\basisInd}(\vecpar)$, where $c_{\basisInd}=\mathbb{E}[y(\vecpar)\multiGPC_{\basisInd}(\vecpar)]$.

\textbf{Assumption 2.} Define the numerical integration operator
\begin{equation}
\mathbb{I}[y(\vecpar)]=\sum_{k=1}^M y(\vecpar_k)w_k.
\end{equation}
We assume that the operator $\mathbb{I}[y(\vecpar)]$ is bounded, i.e., there exists $W\ge0$ such that
\begin{equation}\label{equ:Ibd}
|\mathbb{I}[y(\vecpar)] |\le W \|y(\vecpar)\|_1, \; {\rm where} \; \|y(\vecpar)\|_1=\mathbb{E}[|y(\vecpar)|].
\end{equation}

\textbf{Assumption 3.} The nonlinear least square problem \eqref{equ:NLS} is solved with an error threshold $\epsilon\ge 0$, i.e.,
\begin{equation}\label{equ:numerr}
 \|\mat{\Phi}(\bar{\vecpar})\mat{w}-\mat{e_1}\|_1\le \epsilon,
\end{equation}
where $\|\cdot\|_1$ denotes the $\ell_1$ norm in the Euclidean space.
Here the $j$-th element in the vector $\mat{\Phi}(\bar{\vecpar})\mat{w}-\mat{e_1}$ actually can be written as
$\mathbb{I}[\multiGPC_j(\vecpar)]-\mathbb{E}[\multiGPC_j(\vecpar)]$.

 \subsection{Error Bound of the Proposed Stochastic Collocation}

\begin{theorem} \label{thm:interr}
 Suppose that Assumptions 1-3 hold, then numerical integration error satisfies
\begin{equation}
\left|\mathbb{E}[y(\vecpar)]-\mathbb{I}[y(\vecpar)]\right|\le L\epsilon+W\delta.
\end{equation}
Here $L$ is the upper bound of $\|y(\vecpar)\|_2$ in \eqref{equ:bdy}, $W$ is the upper bound of the numerical integration $\mathbb{I}[y(\vecpar)]$ in \eqref{equ:Ibd}, $\epsilon$ is the numerical error of our nonlinear optimization solver defined in \eqref{equ:numerr}, and $\delta$ is the distance from $y(\vecpar)$ to $\ten{S}_p$ in \eqref{equ:apperr}.
\end{theorem}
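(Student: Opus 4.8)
The plan is to decompose $y(\vecpar)$ into its $L^2_\rho$-orthogonal projection $y_p(\vecpar)$ onto $\mathcal{S}_p$ and the remainder $y(\vecpar)-y_p(\vecpar)$, and to bound the two pieces separately, exploiting the linearity of both the exact expectation $\mathbb{E}[\cdot]$ and the numerical integration operator $\mathbb{I}[\cdot]$. Since both operators are linear, I would write
\begin{equation*}
\mathbb{E}[y(\vecpar)]-\mathbb{I}[y(\vecpar)] = \bigl(\mathbb{E}[y_p(\vecpar)]-\mathbb{I}[y_p(\vecpar)]\bigr) + \bigl(\mathbb{E}[y(\vecpar)-y_p(\vecpar)]-\mathbb{I}[y(\vecpar)-y_p(\vecpar)]\bigr),
\end{equation*}
and then the $L\epsilon$ term will come from the first (polynomial) piece and the $W\delta$ term from the second (remainder) piece.

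For the polynomial piece, I would expand $y_p(\vecpar)=\sum_{|\basisInd|=0}^p c_{\basisInd}\multiGPC_{\basisInd}(\vecpar)$ so that, by linearity, $\mathbb{E}[y_p]-\mathbb{I}[y_p]=\sum_{|\basisInd|=0}^p c_{\basisInd}\bigl(\mathbb{E}[\multiGPC_{\basisInd}]-\mathbb{I}[\multiGPC_{\basisInd}]\bigr)$. By the Cauchy--Schwarz inequality this is at most $\|\mat{c}\|_2\,\|\mat{r}\|_2$, where $\mat{c}$ collects the coefficients and $\mat{r}$ collects the residuals $\mathbb{E}[\multiGPC_{\basisInd}]-\mathbb{I}[\multiGPC_{\basisInd}]$ for $|\basisInd|\le p$. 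Two facts then close this estimate: Parseval's identity together with the non-expansiveness of the orthogonal projection give $\|\mat{c}\|_2=\|y_p\|_2\le\|y(\vecpar)\|_2\le L$; and since $\mat{r}$ is a sub-vector of the full residual vector $\mat{\Phi}(\bar{\vecpar})\mat{w}-\mat{e}_1$, the elementary bound $\|\mat{r}\|_2\le\|\mat{r}\|_1\le\|\mat{\Phi}(\bar{\vecpar})\mat{w}-\mat{e}_1\|_1\le\epsilon$ holds by Assumption 3. Multiplying the two factors yields the bound $L\epsilon$ for the first piece.

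For the remainder piece I would treat the exact and numerical parts separately. Because $y-y_p$ is $L^2_\rho$-orthogonal to every element of $\mathcal{S}_p$, and the constant $\multiGPC_1\equiv1$ lies in $\mathcal{S}_p$, the exact expectation vanishes: $\mathbb{E}[y-y_p]=\mathbb{E}[(y-y_p)\multiGPC_1]=0$. For the numerical part I would invoke Assumption 2 to get $|\mathbb{I}[y-y_p]|\le W\|y-y_p\|_1$, and then use the standard norm inequality $\|y-y_p\|_1=\mathbb{E}[|y-y_p|]\le\sqrt{\mathbb{E}[(y-y_p)^2]}=\|y-y_p\|_2\le\delta$ coming from Assumption 1. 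Hence the remainder piece is bounded by $W\delta$, and a final triangle inequality on the decomposition above gives $|\mathbb{E}[y(\vecpar)]-\mathbb{I}[y(\vecpar)]|\le L\epsilon+W\delta$.

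The main obstacle will be assembling the $L\epsilon$ term with the tight constant rather than a looser one. The delicate point is the chain of norm conversions in the polynomial piece: the coefficient vector must be controlled in $\ell_2$ (via orthonormality and projection non-expansiveness), whereas Assumption 3 controls the residual only in $\ell_1$; reconciling these requires the $\ell_2\le\ell_1$ inequality together with the observation that restricting the residual vector to indices $|\basisInd|\le p$ can only shrink its $\ell_1$ norm. Everything else --- linearity of $\mathbb{E}$ and $\mathbb{I}$, orthogonality of the projection residual to the constant basis function, and the $\mathbb{E}[|\cdot|]\le\|\cdot\|_2$ bound --- is routine once this step is in place.
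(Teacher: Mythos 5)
Your proof is correct and follows essentially the same route as the paper's: the same split into a polynomial-projection term (bounded by $L\epsilon$ via Assumption 3) and a remainder term (bounded by $W\delta$ via Assumption 2 and the $\|\cdot\|_1\le\|\cdot\|_2$ inequality), with the observation $\mathbb{E}[y-y_p]=0$ playing the role of the paper's identity $\mathbb{E}[y(\vecpar)]=\mathbb{E}[y_p(\vecpar)]$. The only cosmetic difference is in the polynomial term, where you pair $\|\mat{c}\|_2\|\mat{r}\|_2$ via Cauchy--Schwarz and Parseval, while the paper pairs $\|\mat{c}\|_{\infty}\|\mat{r}\|_1$ via H\"older with the per-coefficient bound $|c_j|\le L$; both yield the same constant $L\epsilon$.
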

\begin{proof}
See Appendix \ref{app:thm1}.
\end{proof}

Based on Theorem \ref{thm:interr}, we can further derive an upper bound for the following approximation error.

\begin{theorem}\label{thm:apprerr}
With Assumptions 1-3, the numerical error of our stochastic collocation algorithm satisfies
\begin{equation}\label{equ:sc_err}
\|y(\vecpar)-\tilde y(\vecpar)\|_2\le \delta + N_p(LT\epsilon+W\delta),
\end{equation}
where $T=\max_{j,l=1,\ldots,N_{2p}}\|\multiGPC_j(\vecpar)\multiGPC_l(\vecpar)\|_2$.
\end{theorem}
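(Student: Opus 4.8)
The plan is to compare the computed expansion $\tilde y$ against the $L_2$ projection $y_p$ of $y$ onto $\mathcal{S}_p$ (the object introduced in Assumption~1) and apply the triangle inequality,
\[
\|y-\tilde y\|_2 \le \|y-y_p\|_2 + \|y_p-\tilde y\|_2 \le \delta + \|y_p-\tilde y\|_2,
\]
where the first term is controlled by \eqref{equ:apperr}. Since $y_p=\sum_{|\basisInd|=0}^p c_{\basisInd}\multiGPC_{\basisInd}$ and $\tilde y=\sum_{|\basisInd|=0}^p \tilde c_{\basisInd}\multiGPC_{\basisInd}$ lie in the same span of orthonormal polynomials, I would bound the remaining term using the triangle inequality together with $\|\multiGPC_{\basisInd}\|_2=1$,
\[
\|y_p-\tilde y\|_2 = \left\|\sum_{|\basisInd|=0}^p (c_{\basisInd}-\tilde c_{\basisInd})\multiGPC_{\basisInd}\right\|_2 \le \sum_{|\basisInd|=0}^p |c_{\basisInd}-\tilde c_{\basisInd}|.
\]
As there are exactly $N_p$ such indices, it suffices to establish the uniform coefficient bound $|c_{\basisInd}-\tilde c_{\basisInd}|\le LT\epsilon + W\delta$; summing then produces the $N_p(LT\epsilon+W\delta)$ contribution.

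The core step is that uniform coefficient bound. Recalling $c_{\basisInd}=\mathbb{E}[y\multiGPC_{\basisInd}]$ and $\tilde c_{\basisInd}=\mathbb{I}[y\multiGPC_{\basisInd}]$ from \eqref{eq:yproject} and \eqref{equ:yapp}, I would split $y=y_p+(y-y_p)$ and write
\[
c_{\basisInd}-\tilde c_{\basisInd} = \big(\mathbb{E}[y_p\multiGPC_{\basisInd}]-\mathbb{I}[y_p\multiGPC_{\basisInd}]\big) + \big(\mathbb{E}[(y-y_p)\multiGPC_{\basisInd}]-\mathbb{I}[(y-y_p)\multiGPC_{\basisInd}]\big).
\]
For the second group the key observation is orthogonality: $y-y_p\perp\mathcal{S}_p$ and $\multiGPC_{\basisInd}\in\mathcal{S}_p$ whenever $|\basisInd|\le p$, so $\mathbb{E}[(y-y_p)\multiGPC_{\basisInd}]=0$. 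The surviving piece is bounded by Assumption~2 as $|\mathbb{I}[(y-y_p)\multiGPC_{\basisInd}]|\le W\|(y-y_p)\multiGPC_{\basisInd}\|_1$, and a Cauchy--Schwarz step with $\|\multiGPC_{\basisInd}\|_2=1$ gives $\|(y-y_p)\multiGPC_{\basisInd}\|_1\le\|y-y_p\|_2\le\delta$, hence this group contributes at most $W\delta$.

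For the first group, $y_p\multiGPC_{\basisInd}$ is a polynomial of total order at most $2p$, so it expands as $y_p\multiGPC_{\basisInd}=\sum_{l=1}^{N_{2p}}b_l\multiGPC_l$ with $b_l=\mathbb{E}[y_p\multiGPC_{\basisInd}\multiGPC_l]$, whence $\mathbb{E}[y_p\multiGPC_{\basisInd}]-\mathbb{I}[y_p\multiGPC_{\basisInd}]=\sum_{l=1}^{N_{2p}}b_l(\mathbb{E}[\multiGPC_l]-\mathbb{I}[\multiGPC_l])$. Bounding this by $(\max_l|b_l|)\sum_{l}|\mathbb{E}[\multiGPC_l]-\mathbb{I}[\multiGPC_l]|$ and recognizing the second factor as the $\ell_1$ norm of $\mat{\Phi}(\bar{\vecpar})\mat{w}-\mat{e}_1$, Assumption~3 controls it by $\epsilon$. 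I expect the delicate point to be obtaining the clean factor in $\max_l|b_l|$: rather than expanding $y_p$ into its $N_p$ components (which would leak an extra $\sqrt{N_p}$), I would apply Cauchy--Schwarz directly to $b_l=\mathbb{E}[y_p(\multiGPC_{\basisInd}\multiGPC_l)]$ to get $|b_l|\le\|y_p\|_2\,\|\multiGPC_{\basisInd}\multiGPC_l\|_2\le LT$, using that the projection is a contraction ($\|y_p\|_2\le\|y\|_2\le L$ by \eqref{equ:bdy}) and the definition of $T$. Combining the two groups yields $|c_{\basisInd}-\tilde c_{\basisInd}|\le LT\epsilon+W\delta$, and folding this back into the triangle inequality above gives $\|y-\tilde y\|_2\le\delta+N_p(LT\epsilon+W\delta)$, as claimed.
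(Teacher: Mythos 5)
Your proposal is correct and follows essentially the same route as the paper's proof in Appendix D: triangle inequality through the projection $y_p$, a per-coefficient split into a quadrature-error term (bounded by $LT\epsilon$ via the order-$2p$ expansion of $y_p\multiGPC_{\basisInd}$ and Assumption~3) and a projection-error term (bounded by $W\delta$ via Assumption~2 and Cauchy--Schwarz), then summing over the $N_p$ coefficients. The only cosmetic difference is that you bound $\|y_p-\tilde y\|_2$ by the $\ell_1$ norm of the coefficient differences via the triangle inequality, while the paper uses orthonormality to write it as the $\ell_2$ norm before applying the same uniform bound; both yield the stated $N_p(LT\epsilon+W\delta)$.
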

\begin{proof}
See Appendix \ref{app:thm2} for the details.
\end{proof}

{\bf Remarks:} Theorem~\ref{thm:apprerr} indicates the following intuitions:
\begin{enumerate}
\item if the nonlinear optimization solver is accurate enough and $\epsilon$ is very small, the error of our stochastic collocation is dominated by the approximation error $\delta$;
\item as we increase the order of basis functions, $\delta$ decreases and the result becomes more and more accurate;
\item if the total order of the basis function is very high and $\delta$ becomes extremely small, the optimization error $\epsilon$ will dominate the overall numerical error, and the convergence will slow down.
\end{enumerate}

Once \eqref{equ:nmint1} holds, we should have the following result
\begin{equation*}
\mathbb{I}[\multiGPC_i(\vecpar)\multiGPC_j(\vecpar)] =\sum_{k=1}^M  \multiGPC_{i}(\vecpar_k)\multiGPC_{j}(\vecpar_k)w_k=\delta_{i,j}.
\end{equation*}
In practice, there are numerical errors caused by quadrature points and weights obtained by the optimization solver. In the following lemma, we show that the error is bounded.

\begin{lemma}\label{lemma:coverr}
Suppose that Assumptions 1-3 hold, define a matrix $\mat{V}\in\mathbb{R}^{N_p\times N_p}$ with each element $\mat{V}_{ij}=\mathbb{I}[\multiGPC_i(\vecpar)\multiGPC_j(\vecpar)]$ being a numerical evaluation of
$\mathbb{E}[\multiGPC_i(\vecpar)\multiGPC_j(\vecpar)]$ using the quadrature points and weights from solving \eqref{equ:NLS}.
We have
\begin{equation}\label{equ:errcov}
\|\mat{V}-I_{N_p}\|_F\le N_pT \epsilon.
\end{equation}
\end{lemma}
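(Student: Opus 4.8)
The plan is to exploit the linearity of the numerical integration operator $\mathbb{I}$ together with Assumption~3, which already controls the integration error of each individual basis function $\multiGPC_l$ up to order $2p$. First I would observe that, by orthonormality, $\mathbb{E}[\multiGPC_i(\vecpar)\multiGPC_j(\vecpar)]=\delta_{ij}$, so that $I_{N_p}$ is precisely the matrix of exact expectations and each entry of $\mat{V}-I_{N_p}$ equals the pure integration error $\mathbb{I}[\multiGPC_i\multiGPC_j]-\mathbb{E}[\multiGPC_i\multiGPC_j]$.

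The central idea is that for $i,j\le N_p$ the product $\multiGPC_i(\vecpar)\multiGPC_j(\vecpar)$ is a polynomial of total order at most $2p$, hence it lies in the span of $\{\multiGPC_l\}_{l=1}^{N_{2p}}$. I would therefore write $\multiGPC_i\multiGPC_j=\sum_{l=1}^{N_{2p}} a_{ij}^l\multiGPC_l$ with coefficients $a_{ij}^l=\mathbb{E}[\multiGPC_i\multiGPC_j\multiGPC_l]$. Applying the linear operator $\mathbb{I}$ and subtracting the exact integral gives $\mathbb{I}[\multiGPC_i\multiGPC_j]-\mathbb{E}[\multiGPC_i\multiGPC_j]=\sum_{l=1}^{N_{2p}} a_{ij}^l\bigl(\mathbb{I}[\multiGPC_l]-\mathbb{E}[\multiGPC_l]\bigr)$, which converts the quadratic-in-basis error into a weighted combination of the linear-in-basis errors that Assumption~3 already bounds.

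Next I would bound the two factors separately. For the coefficients, Cauchy-Schwarz together with $\|\multiGPC_l\|_2=1$ yields $|a_{ij}^l|=|\mathbb{E}[(\multiGPC_i\multiGPC_j)\multiGPC_l]|\le\|\multiGPC_i\multiGPC_j\|_2\le T$, where $T=\max_{j,l}\|\multiGPC_j\multiGPC_l\|_2$ as defined in Theorem~\ref{thm:apprerr}. For the errors, I would note that the $l$-th entry of $\mat{\Phi}(\bar{\vecpar})\mat{w}-\mat{e}_1$ is exactly $\mathbb{I}[\multiGPC_l]-\mathbb{E}[\multiGPC_l]$, so \eqref{equ:numerr} gives $\sum_{l=1}^{N_{2p}}|\mathbb{I}[\multiGPC_l]-\mathbb{E}[\multiGPC_l]|\le\epsilon$. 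Combining these, $|(\mat{V}-I_{N_p})_{ij}|\le T\sum_l|\mathbb{I}[\multiGPC_l]-\mathbb{E}[\multiGPC_l]|\le T\epsilon$ for every entry.

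Finally, summing the squared entries over the $N_p\times N_p$ matrix produces $\|\mat{V}-I_{N_p}\|_F\le\sqrt{N_p^2(T\epsilon)^2}=N_pT\epsilon$, as claimed. I expect the only delicate point to be the first reduction step: one must verify that the product of two order-$\le p$ basis functions is captured \emph{exactly} within the order-$\le 2p$ span, so that Assumption~3 (stated for all $j=1,\ldots,N_{2p}$) applies and no component outside the span leaks into the error. Everything after that is routine estimation via Cauchy-Schwarz, the triangle inequality, and the definition of the Frobenius norm.
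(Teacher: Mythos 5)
Your proposal is correct and follows essentially the same route as the paper's proof: expand $\multiGPC_i\multiGPC_j$ in the order-$2p$ orthonormal basis, reduce the entrywise error to a weighted sum of the single-basis-function integration errors controlled by Assumption~3, bound the coefficients by $T$, and pass from the entrywise bound $T\epsilon$ to the Frobenius bound $N_pT\epsilon$. The only (immaterial) difference is the H\"older pairing: you use $\|\mat{a}\|_\infty\cdot\|\mat{\Phi}(\bar{\vecpar})\mat{w}-\mat{e}_1\|_1$ whereas the paper uses $\|\mat{a}\|_2\cdot\|\mat{\Phi}(\bar{\vecpar})\mat{w}-\mat{e}_1\|_2$ together with $\|\cdot\|_2\le\|\cdot\|_1$, and both yield the same $T\epsilon$ per entry.
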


\begin{proof}
See Appendix \ref{app:coverr}.
\end{proof}



\begin{figure*}[t]
	\centering
		\includegraphics[width=5.8in]{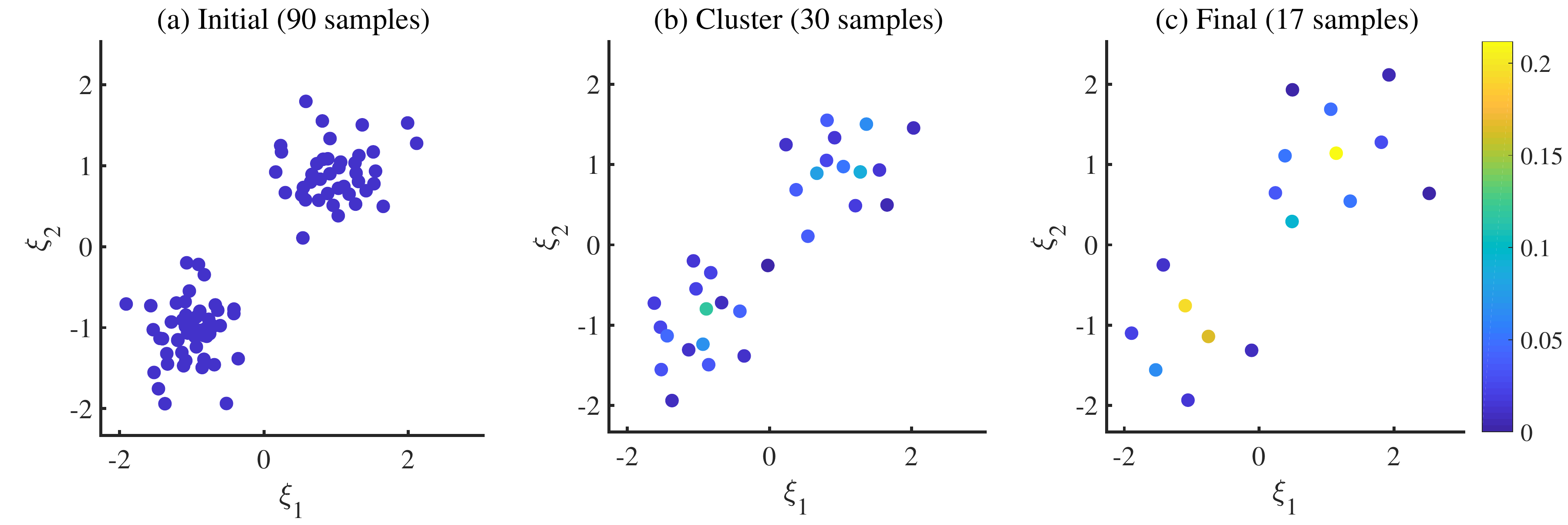}
  \caption{The process of generating quadrature samples and weights for the synthetic example. The quadrature weights are shown by the color bar. (a) Initial candidate points generated via Monte Carlo; (b) clustered samples via the weighted complete linkage method in Algorithm~\ref{alg:cluster}; (c) the optimized quadrature nodes by our Algorithm \ref{alg:update_coeff}. This process only depends on the probability density function and the basis functions, and is independent of $y(\vecpar)$.}
  \label{fig_quadrature_synth}
 \end{figure*}

\begin{figure}[t]
	\centering
\includegraphics[width=75mm]{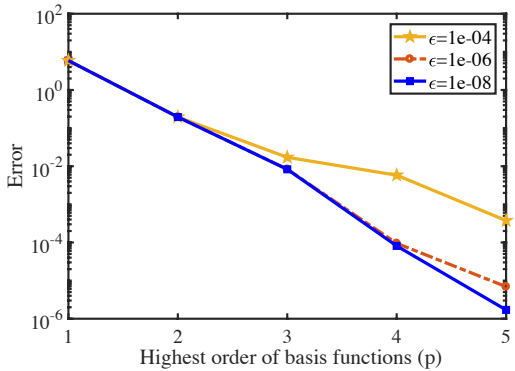}
\caption{Convergence rate for the synthetic example. Here $\epsilon$ is the numerical error of optimization defined in \eqref{equ:numerr}. This figure demonstrates the error estimated in \eqref{equ:sc_err}: the stochastic collocation algorithm shows a nearly exponential convergence rate as $p$ increases and before $\epsilon$ dominates the error.}
	\label{fig:synth}
\end{figure}

\section{Numerical Results}
\label{sec:numerical}
In order to show the efficiency of our proposed method, we conduct numerical experiments on a synthetic example, a 3-stage CMOS electronic ring oscillator, and an optical filter. The stopping criterion in \eqref{equ:NLS} is set as $\epsilon=10^{-8}$ unless stated otherwise. In all examples, we use some Gaussian mixture models to describe the joint density functions of correlated non-Gaussian random parameters. \ccf{The Matlab codes and a demo example are provided online at: \url{https://web.ece.ucsb.edu/~zhengzhang/codes_dataFiles/uq_ng}. }

\subsection{A Synthetic Example}

Firstly we consider a synthetic example, and use it to show the accuracy and convergence rate of our proposed stochastic collocation algorithm. Specifically, we consider the following smooth function of two correlated parameters
\begin{equation}
\label{ex:synth}
\out (\vecpar) =  \exp(\xi_1)  +0.1\cos(\xi_1)\sin(\xi_2).
\end{equation}
We assume that the random parameters follow a Gaussian mixture distribution
\begin{equation}\label{equ:synth}
\vecpar = \vecpar_0+\frac{1}{10}\Delta\vecpar,\text{ where } \Delta\vecpar \sim \frac12 \mathcal{N}(\boldsymbol \mu_1, \boldsymbol \Sigma_1) + \frac12\mathcal{N}(\boldsymbol \mu_2, \boldsymbol \Sigma_2). \nonumber
\end{equation}
Here, the mean values $\boldsymbol \mu_1=\mat{1}$,  $\boldsymbol \mu_2=-\mat{1}$; the positive definite covariance matrices
$\boldsymbol \Sigma_1$ and $\boldsymbol \Sigma_2$ are randomly generated. We use $\mat{1}$ to denote a vector of a compatible size with all elements being one. We will also use this notation in other examples.

We first illustrate how to generate the quadrature samples and weights by our optimization-based quadrature rule. Assume that we want to approximate $y(\vecpar)$ by a forth-order expansion. Firstly, 90 random samples are generated via Monte Carlo. Secondly, these points are grouped into 30 clusters via our proposed weighted linkage clustering approach, and they are used as the initial samples and weights of Algorithm~\ref{alg:update_coeff}. Finally, the number of quadrature nodes is reduced to 17 automatically by Algorithm~\ref{alg:update_coeff}, whereas the lower bound for the number of quadrature nodes is 15. The process of generating quadrature samples and weights is shown in Fig.~\ref{fig_quadrature_synth}.

\begin{table}
\centering
\caption{Accuracy comparison on the synthetic experiments. The underscores indicate precision.}
\begin{tabular}{c|c|c|c|c|c}
\hline
method& \multicolumn{5}{c}{Proposed}  \\ \hline
$p$ & 1&2&3&4&5\\\hline
\# samples & 3&6&10&17&66\\ \hline
mean & 2.7\underline{8}35& 2.782\underline{9}& 2.782\underline{9}& 2.782\underline{9}&  2.782\underline{9}\\
\thickhline
method&\multicolumn{5}{c}{Monte Carlo} \\ \hline
\# samples &$10$ & $10^2$& $10^3$& $10^4$&$10^5$\\ \hline
mean &  \underline{2}.6799 & 2.\underline{7}625 &2.\underline{7}911&2.7\underline{8}11 &2.782\underline{9}\\
\hline
\end{tabular}
\label{tab:synthetic}
\end{table}

Theorem \ref{thm:apprerr} shows that the error depends on two parts: the numerical error $\epsilon$ of the optimization solver of our quadrature rule, and the approximation error $\delta$ by order-$p$ basis functions.
When $p$ is small, $\|y(\vecpar)-y_p(\vecpar)\|_2 \leq \delta$ dominates the error.
When $p$ is large, $\delta$ becomes small and $\epsilon$  dominates the error, therefore smaller $\epsilon$ will produce more accurate results.
In order to verify this theoretical result, we perform stochastic collocation by using different orders of basis functions (i.e., $p=1$ to $5$) and by setting different error thresholds (i.e., $\epsilon=10^{-4}$, $ 10^{-6}$ and $ 10^{-8}$) in the optimization-based quadrature rule. As shown in Fig.~\ref{fig:synth}, our stochastic collocation has a nearly exponential convergence rate before $\epsilon$ dominates the error.

We further compare our our method with Monte Carlo in Table~\ref{tab:synthetic}.
Our method provides a closed-form expression for the mean value of $y(\vecpar)$, and a 2nd-order expansion using $6$ quadrature points is sufficient to achieve a precision of $4$ fractional digits. In contrast, Monte Carlo requires $10^5$ random samples to achieve the similar level of accuracy.

\begin{figure}[t]
	\centering \includegraphics[width=3.2in]{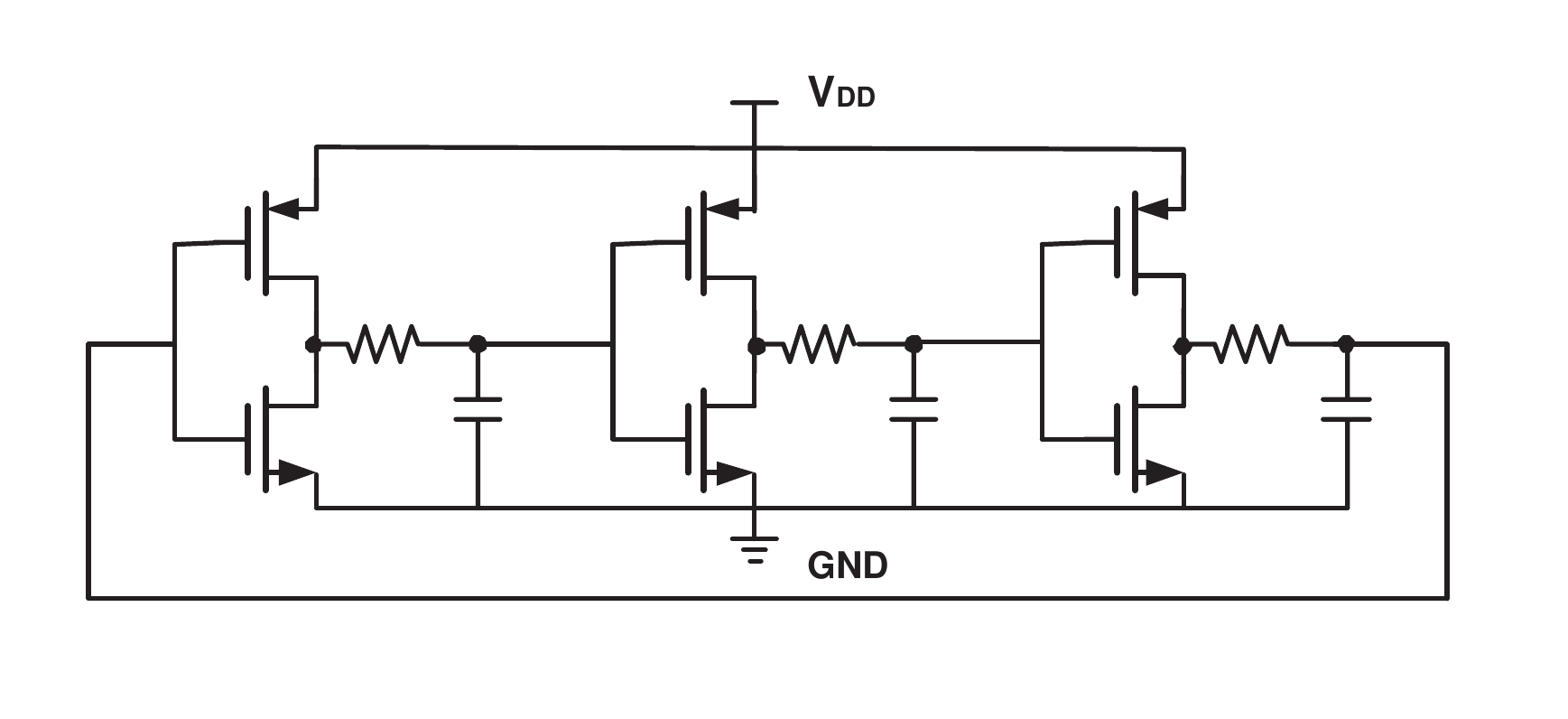}
\caption{Schematic of a 3-stage CMOS ring oscillator. }
	\label{fig:ring}
\end{figure}

\begin{figure*}[t]
\centering
\includegraphics[width=5.5in] {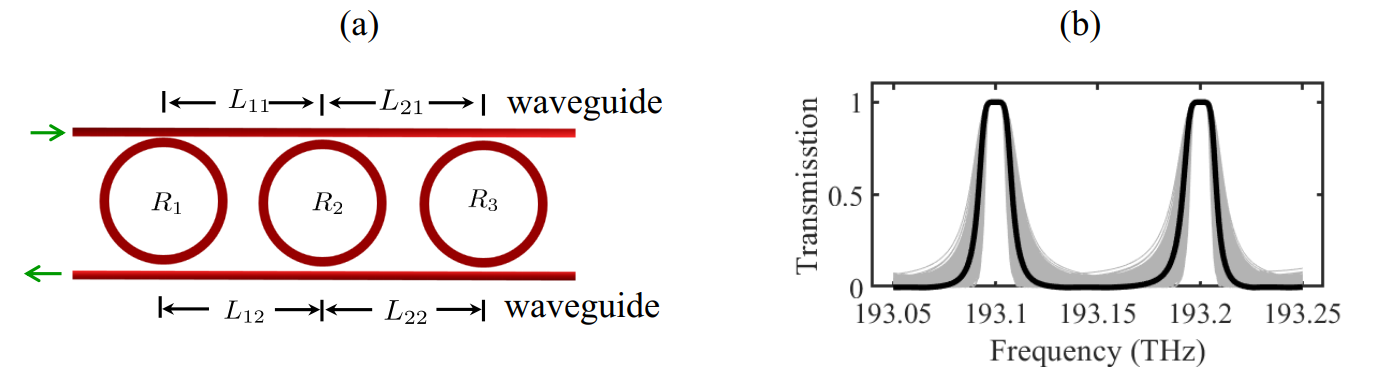} 
\caption{(a) Schematic of a 3-stage parallel-coupled ring resonator optical filter. $L_{12}$, $L_{21}$, $L_{23}$ and $L_{32}$ are the connecting waveguides, and $R_1$, $R_2$ and $R_3$ denote the rings.
(b) The black line shows the nominal transmission function, and the thin grey lines show the effect of fabrication uncertainties on the waveguide lengths of $L_{12}$, $L_{21}$, $L_{23}$, $L_{32}$.
}
\label{fig:coupled_ring_resonator}
\end{figure*}

\subsection{A 3-Stage CMOS Electronic Ring Oscillator}

We continue to verify our algorithm by the 3-stage CMOS ring oscillator in Fig.~\ref{fig:ring}. We model the relative threshold voltage variations of six transistors via
 \begin{equation}
\vecpar = \vecpar_0+\mat{D} \Delta\vecpar,\; \text{with }\Delta\vecpar\sim  \frac23 \mathcal{N}(\boldsymbol \mu_1, \boldsymbol \Sigma_1) + \frac13\mathcal{N}(\boldsymbol \mu_2, \boldsymbol \Sigma_2), \nonumber
\end{equation}
where $\mat{D}$ is a diagonal scaling matrix, $\mat{\mu}_1=\mat{1}$, $\mat{\mu}_2=-\mat{1}$, and
$\boldsymbol \Sigma_1$ and $\boldsymbol \Sigma_2$  are randomly generated positive definite matrices.

 We aim to approximate the frequency by a 2nd-order expansion of our multivariate basis functions. Our optimization-based quadrature rule generates 33 pairs of quadrature samples and weights, then a deterministic periodic steady-state simulator is called repeatedly to simulate the oscillator at all parameter samples. Fig.~\ref{fig:ring_results}
shows the obtained weights of all basis functions and the probability density function. 

We compare the computed mean value  from our methods with that from Monte Carlo in Table~\ref{tab:ring}. Monte Carlo method converges very slowly, and requires $3030\times$ more simulation samples to achieve the similar level of accuracy (with 2 accurate fractional digits).

\begin{figure}[t]
	\centering
\includegraphics[width=90mm, height=1.5in]{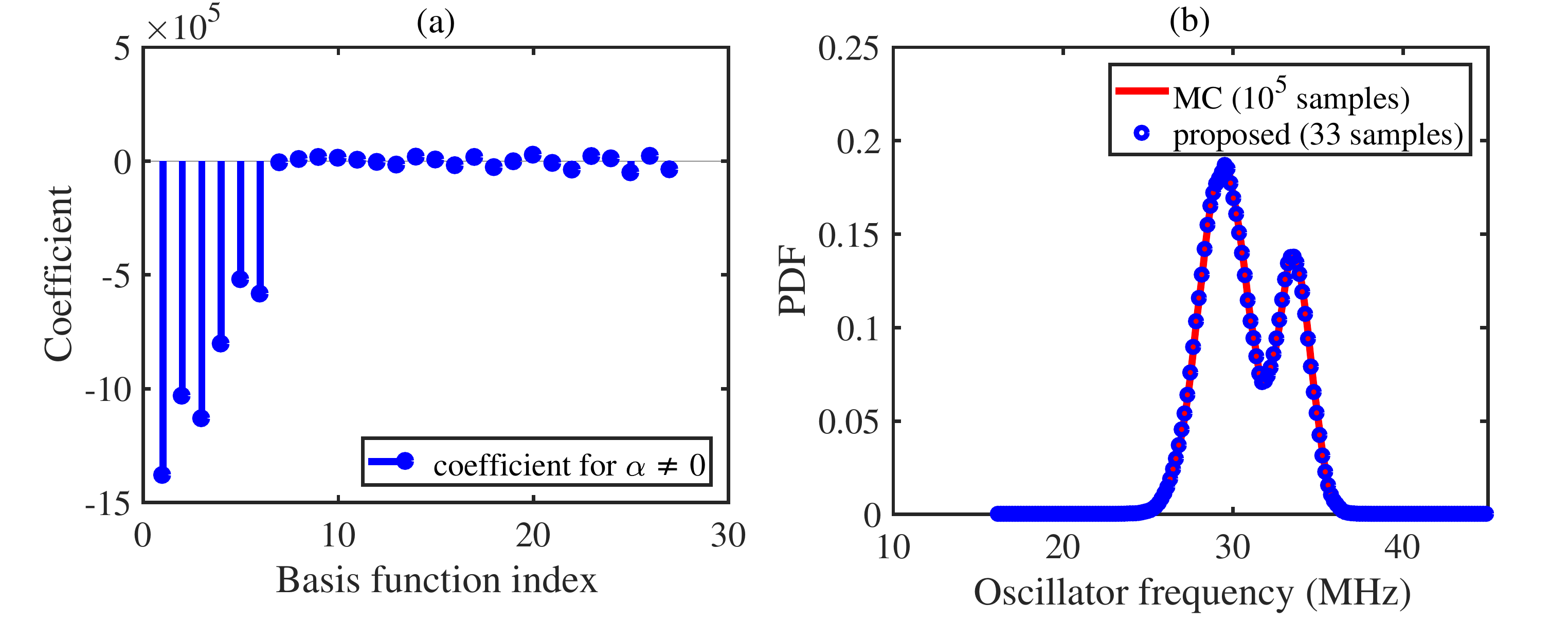}
\caption{Numerical results of the CMOS ring oscillator. (a) obtained coefficients/weights of our basis functions; (b) probability density functions of the oscillator frequency obtained by our proposed method and Monte Carlo (MC).}
	\label{fig:ring_results}
\end{figure}

\begin{table}[t]
\centering
\caption{Accuracy comparison on the CMOS ring oscillator. The underscores indicate precision.}
\begin{tabular}{c|c|c|c|c|c}
\hline
method& Proposed &\multicolumn{4}{c}{Monte Carlo} \\ \thickhline
\# samples & 33 & $10^2$& $10^3$& $10^4$&$10^5$\\ \hline
mean (MHz) &30.8\underline{3}&3\underline{0}.93&30.\underline{8}8& 30.\underline{8}0&30.8\underline{3} \\
\hline
\end{tabular}
\label{tab:ring}
\end{table}

\subsection{A Parallel Coupled Ring Resonator Optical Filter}

In this subsection, we consider the 3-stage parallel-coupled ring resonator optical filter\footnote{The details of this benchmark can be found at \url{https://kb.lumerical.com/en/pic_circuits_coupled_ring_resonator_filters.html}} in Fig.~\ref{fig:coupled_ring_resonator} (a). This optical filter is a versatile component for wavelength filtering, multiplexing, switching, and modulation in photonic integrated circuits. This circuit has a nominal 3-dB bandwidth of 12 GHz, and the coupling coefficients for the three rings are $K_1=K_3=0.198836$ and $K_2=0.356423$. In the nominal design, the waveguide lengths $L_{12},L_{21},L_{23},L_{32}$ are all $30.6624$ $ \mu$m, and the circumference of all ring are $R_{1}=R_2=R_3=2997.92 $ $\mu$m. In practice, there exist non-Gaussian correlated uncertainties in the waveguide geometric parameters. The effect of fabrication uncertainties are shown in
Fig.~\ref{fig:coupled_ring_resonator} (b).

 \begin{figure}[t]
	\centering  \includegraphics[width=90mm,height=1.6in]{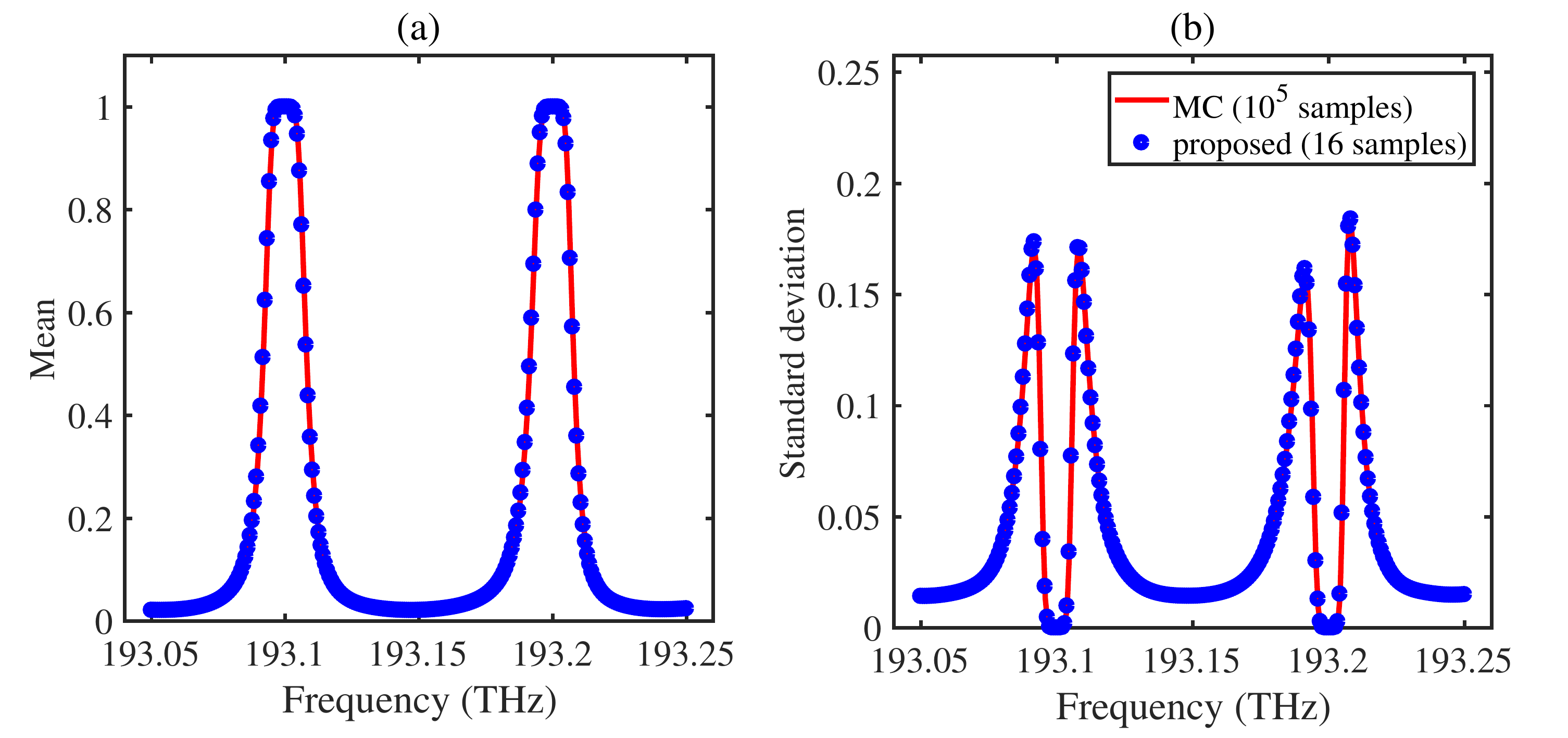}
\caption{Simulation results  with respect to the geometric uncertainties in the waveguide length of $L_{12}$, $L_{21}$, $L_{23}$ and $L_{32}$.  (a) obtained mean value of the power transmission rate; (b) standard deviation of the transmission rate.} \label{fig:res_ringresonator_4000}
\end{figure}

 \begin{figure}[t]
	\centering  \includegraphics[width=90mm,height=1.6in]{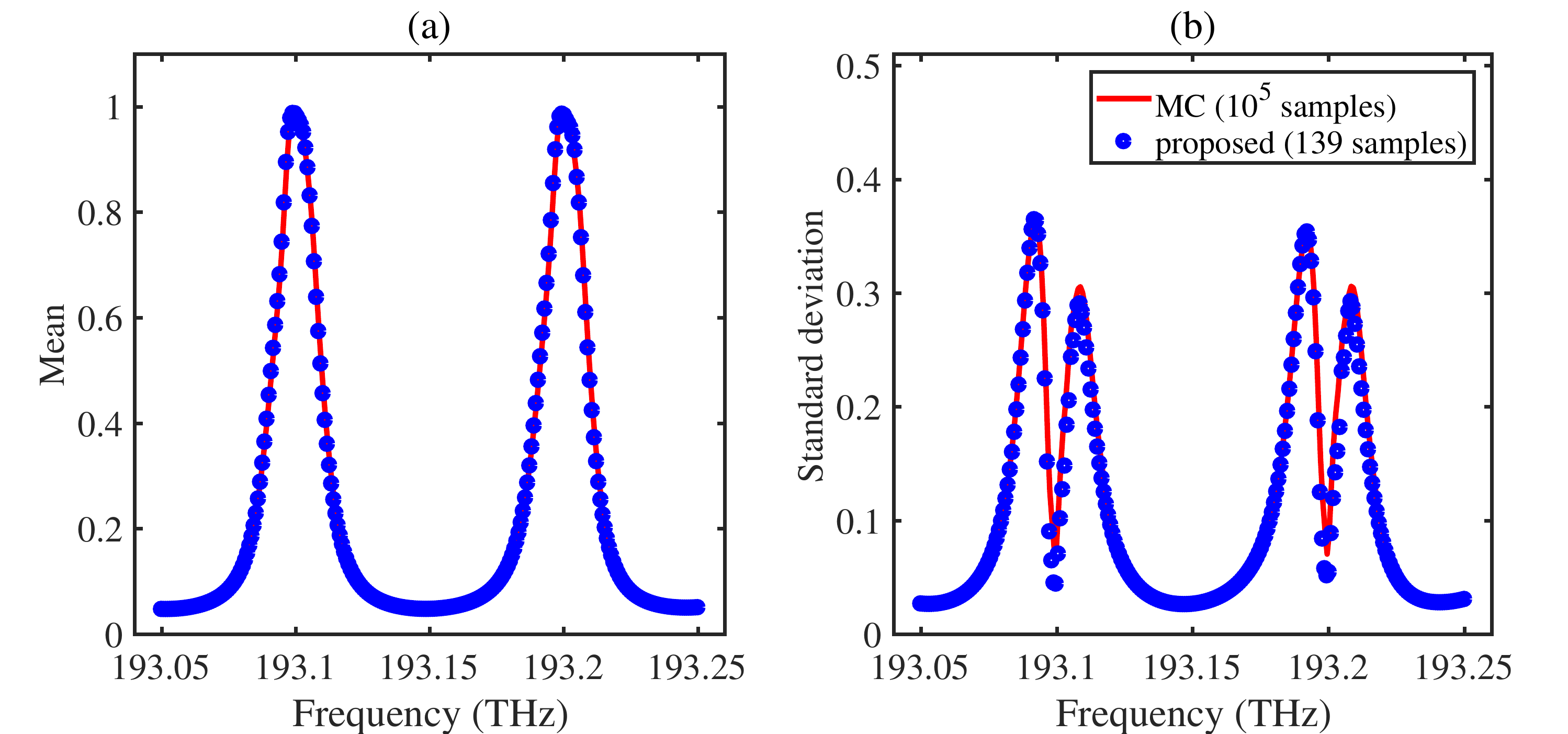}
\caption{Simulation results with respect to the geometric uncertainties in the waveguide length of  $L_{12}$, $L_{21}$, $L_{23}$, $L_{32}$, $R_1$, $R_2$, $R_3$, and the uncertainties in effective index for  $L_{12}$, $L_{21}$, $L_{23}$ and $L_{32}$.  (a) obtained mean value of the power transmission rate; (b) standard deviation of the transmission rate.} \label{fig:res_ringresonator_4430}
\end{figure}

Our goal is to build a $2$nd-order stochastic model to approximate the power transmission curve at different frequency points $\out(f,\vecpar)=\sum_{|\basisInd|=0}^p c_{\basisInd}(f) \multiGPC_{\basisInd}(\vecpar)$.
We use a Gaussian mixture model to describe the uncertainties,
\begin{equation}
\vecpar = \vecpar_0+\Delta\vecpar,\text{ where }\Delta\vecpar\sim \frac12\mathcal{N}(\boldsymbol{\mu}_1,\boldsymbol{\Sigma}_1)+\frac12\mathcal{N}(\boldsymbol{\mu}_2,\boldsymbol{\Sigma}_2).\nonumber
\end{equation}
For the  waveguide length parameters, we use
\begin{equation*}
\boldsymbol{\mu}_1= -\boldsymbol{\mu}_2=25 \times \mat{1} {\rm nm},\   \boldsymbol{\Sigma}_1=\boldsymbol{\Sigma}_2=6.25(\mathbf{I}+0.5\mathbf{E}).
\end{equation*}
The uncertainties of the effective index follows a Gaussian mixture distribution with
\begin{equation*}
\boldsymbol{\mu}_1= -\boldsymbol{\mu}_2= 10^{-3}\times \mat{1},  \ \boldsymbol{\Sigma}_1=\boldsymbol{\Sigma}_2=10^{-6}(\mathbf{I}+0.5\mathbf{E}).
\end{equation*}

We perform two experiments for the optical filter. The first experiment only considers the uncertainties of the waveguide lengths $L_{12}$, $L_{21}$, $L_{23}$ and $L_{32}$.
The second experiments considers uncertainties in the waveguide lengths $L_{12}$, $L_{21}$, $L_{23}$, ring geometry $L_{32}$, $R_1$, $R_2$ and $R_3$, as well as the effective index in $L_{12}$, $L_{21}$, $L_{23}$ and $L_{32}$.
The mean value and standard derivation of the output response are shown in Fig.~\ref{fig:res_ringresonator_4000} and Fig.~\ref{fig:res_ringresonator_4430}, respectively.
Although our method only uses 16 or 139 samples, it is able to achieve the similar accuracy with Monte Carlo that consumes $10^5$ simulation samples.

\begin{figure*}[t]
	\centering \includegraphics[width=5.5in]{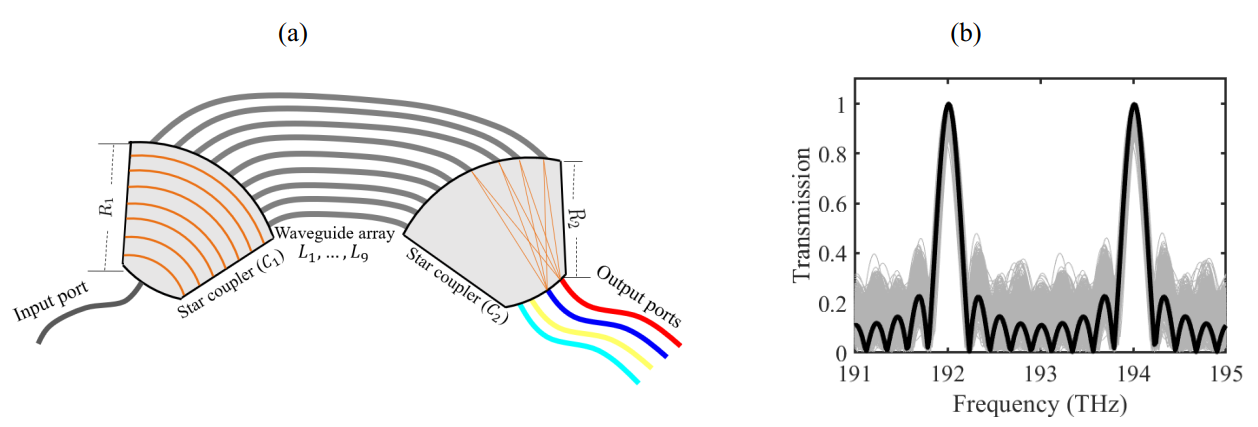}
\caption{(a) Schematic of an AWG with 9 waveguide arrays; (b) The nominal transmission rate from the input to output Port 1. The black curve shows the result without any uncertainties, and the grey lines show the effects caused by the fabrication uncertainties of radius $R_1$, $R_2$ and waveguide lengths $L_1,\ldots,L_9$.}
	\label{fig:AWG}
\end{figure*}

\subsection{An Arrayed Waveguide Grating (AWG)}
Finally, we consider an arrayed waveguide grating (AWG)~\cite{zhang2018verilog}. The AWG is essential for wavelength division and multiplexing in photonic systems. In our experiment, we use an AWGR with 9 waveguide arrays and two star couplers, as shown in Fig. \ref{fig:AWG} (a). In the nominal design, the radius of each star coupler is $R_1=R_2= 2.985 $ mm, and the waveguide lengths $L_1,\ldots,L_9$ range from $46$ $ \mu $m to $420$ $ \mu$m. In practice, there exist non-Gaussian correlated uncertainties in the device geometric parameters, and the resulting performance uncertainties are shown in Fig.~\ref{fig:AWG}~(b).

\begin{figure}[t]
	\centering
	\includegraphics[width=90mm,height=1.8in]{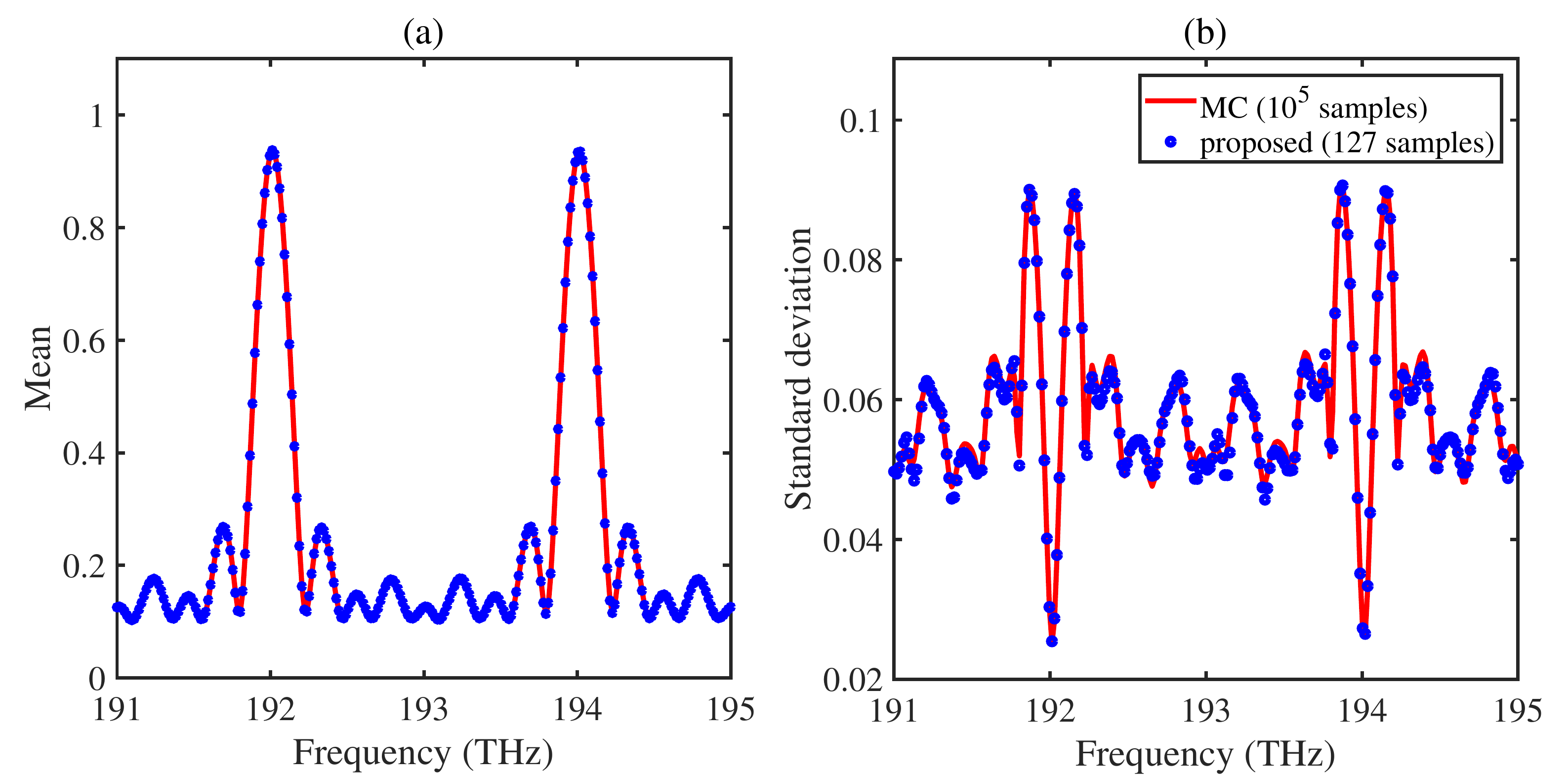}
\caption{Numerical results of the AWG with non-Gaussian correlated uncertainties in radius $R_1, R_2$ and the waveguide array lengths of $L_1,\ldots,L_9$. (a) mean value of the transmission rate; (b) standard deviation of the transmission rate obtained by our proposed method and Monte Carlo (MC).}
	\label{fig:AWG_results}
\end{figure}

We aim to build a 2nd-order stochastic model to approximate the transmission rates. A Gaussian-mixture model is used to describe the geometric uncertainties:
\begin{equation}
\vecpar = \vecpar_0+\Delta\vecpar,\text{ where }\Delta\vecpar\sim \frac12\mathcal{N}(\boldsymbol{\mu}_1,\boldsymbol{\Sigma}_1)+\frac12\mathcal{N}(\boldsymbol{\mu}_2,\boldsymbol{\Sigma}_2).\nonumber
\end{equation}
For the radius of the star couplers,  we set the mean values as
$\boldsymbol{\mu}_1=-\boldsymbol{\mu}_2=29.8\times \mat{1}$ $\mu$m. For the waveguide array lengths, we set $\boldsymbol{\mu}_1=-\boldsymbol{\mu}_2=0.05\times \mat{1}$ $\mu $m. The covariance matrices are block diagonal positive definite.

We compare the computed mean value and standard deviation of our method with that from Monte Carlo in Fig.~\ref{fig:AWG_results}. Using only 127 simulation samples, our method is able to achieve the similar accuracy with $10^5$ Monte Carlo samples.
Fig.~\ref{fig:AWG_pdf} further shows the probability density functions of the transmission rates at two frequency points $f=191.9478$ THz and $f=192.3494$ THz.

\begin{figure}[t]
	\centering \includegraphics[width=90mm,height=1.8in]{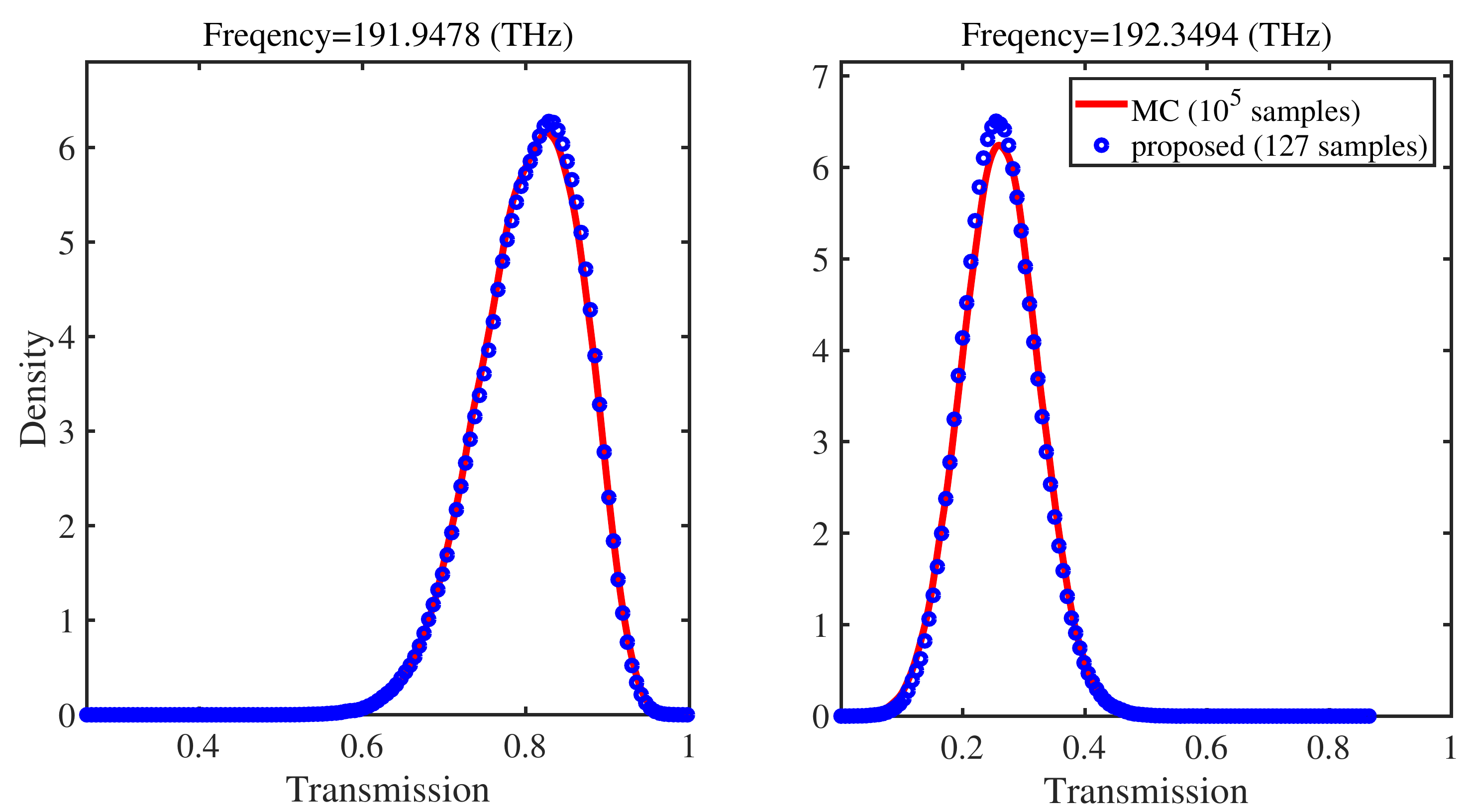}
\caption{Probability density functions of the transmission rates at two frequency points $f=191.9478$ THz and $f=192.3494$ THz obtained by our proposed method and Monte Carlo (MC)}
	\label{fig:AWG_pdf}
\end{figure}

\subsection{Practical Number of Quadrature Samples}
\label{sec:result_num}
Finally, Table~\ref{tab:sample} shows the number of quadrature samples used by our approach in all numerical  experiments. The lower and upper bounds of the number of samples from Theorem~\ref{thm:Mbound} are listed in the last two columns. Clearly, in most cases, the practical number of samples is very close to the lower bound. When the order of basis function is very high, the obtained number of quadrature samples may occasionally becomes close to the upper bound. This is because the following reason: when $p$ is very large, the objective function in \eqref{equ:NLS} is a polynomial function of extremely high order (i.e., $4p$), and the coordinate descent solver becomes hard to converge. We expect that the number of quadrature samples will also be close to the theoretical lower bound even for very large $p$, if a better nonlinear optimization solver is developed in the future.

\begin{table}
\centering
\caption{The number of quadrature samples used in all experiments. Here, $p$ denotes the maximal order of basis functions, $d$ is the number of random parameters.}
\label{tab:sample}
 \begin{tabular}{|c|cc|ccc|}
\hline
&& & \multicolumn{3}{c|}{Proposed}  \\
\hline
Benchmarks& $p$& $d$& \# samples & lower bound &upper bound\\ \thickhline
\multirow{5}{*}{Synthetic} & 1& 2& 3&3&6\\
 & 2& 2&6 &6 &15\\
 & 3& 2& 10&10&28\\
 & 4& 2& 17&15&45\\
 & 5& 2& 66&21&66\\
 \hline
CMOS ring & 2& 6&33&28&210\\ \hline
\multirow{2}{*}{Optical filter} &2& 4&16 & 15& 70   \\
 &2& 11&139 &78 &1365  \\
\hline
AWG & 2& 11&127&78 &1365\\ \hline
\end{tabular}
\end{table}


\section{Conclusion and Remarks}
This paper has investigated a long-standing research challenge: how can we handle non-Gaussian correlated uncertainties by stochastic spectral methods? We have proposed several theories and algorithms to overcome this challenge and have tested them by various benchmarks. Specifically, we have proposed a set of orthonormal basis functions that work extremely well for non-Gaussian correlated process variations which are beyond the capability of the existing well-known generalized polynomial-chaos theory. We have presented an optimization approach to calculate the quadrature nodes and weights required in the projection step. We have also provided some rigorous theoretical results regarding the required number of quadrature samples and the error bound of our framework. Our method has demonstrated a nearly exponential convergence rate on a smooth synthetic example. It has also achieved 700$\times$ to 6000$\times$ speedup than Monte Carlo on several practical design benchmarks, including a CMOS electronic ring oscillator, an optical filter built with 3-stage photonic ring resonators and an arrayed waiveguide grating.

We have two final remarks:
\begin{itemize}
\item Based on our theoretical analysis, we conclude that as long as the stochastic unknown output is smooth enough, and if the the optimization solver in our quadrature rule has a small error, both the numerical integration and approximation error will be very small, leading to highly accurate results in our stochastic collocation framework.

\item It remains an open problem to determinate the required minimum number of quadrature nodes. Our numerical experiments show an excellent heuristic result: the practical number of quadrature nodes used in our framework is almost always close to the theoretical lower bound.

\end{itemize}



\section*{Acknowledgment}
The authors would like to thank the anonymous reviewers for their detailed comments.
We also appreciate Allen Sadun, Kaiqi Zhag and Kaikai Liu for their helpful discussions on the benchmarks and on Lumerical interconnect, and thank Max Gershman for his help on some of the code implementation.

\appendices
\section{Proof of Theorem \ref{thm:Mbound}}
\label{append:Mbound}

We show the lower bound and upper bound of the number of quadrature points required to achieve $2p$-th-order accuracy are $N_p$ and $N_{2p}$, respectively.

Firstly, according to Appendix~\ref{app:lem1}, \eqref{equ:prodInt} holds if the quadrature points and weights satisfy \eqref{equ:nmint1}.
As a result, we have
\begin{equation}
\mat{Q}\text{diag}(\mat{w})\mat{Q}^T=\mat{I}_{N_p},
\end{equation}
where $\mat{Q}\in \mathbb{R}^{N_p\times M}$ with each element $\mat{Q}_{ij}=\multiGPC_i(\vecpar_j)$, and $\mat{I}_{N_p}$ is an $N_p$-by-$N_p$ identity matrix.
Because the right-hand side is full rank, $\mat{Q}$ has a full row rank and thus $M\ge N_p$.

We further notice that the first row of \eqref{equ:nmint1} is  $\sum_{k=1}^Mw_k=1$, therefore  \eqref{equ:nmint1} can be rewritten as
\begin{equation}\label{equ:conveccomb}
\mat{Q}_1\mat{w}=\mat{0}_{N_{2p}-1},\ \sum_{k=1}^Mw_k=1, \ \mat{w}\ge0,
\end{equation}
where $\mat{Q}_1\in\mathbb{R}^{(N_{2p}-1)\times M}$ consists of the last $N_{2p}-1$ rows of $\mat{Q}$, and $\mat{0}_{N_{2p}-1}\in\mathbb{R}^{N_{2p}-1}$ is a zero vector.
According to the Carath\'eodory's
Theorem~\cite{barany2012notes}, because $\mat{0}_{N_{2p}}$ lies in the convex hull formed by the column vectors of $\mat{Q}_1$, it can be written as the convex combination of not more than $N_{2p}$ column vectors. In other words, there exists a matrix $\hat{\mat{Q}}_1$ formed by only $N_{2p}$ columns of $\mat{Q}_1$ such that \eqref{equ:conveccomb} still holds if we replace $\mat{Q}_1$ with $\hat{\mat{Q}}_1$ and change the length of $\mat{w}$ accordingly. Vector $\mat{0}_{N_{2p}-1}$ being in the convex hull of $\mat{Q}_1$ is a natural result of our numerical quadrature rule defined on the selected basis functions, therefore there exists $M\le N_{2p}$.


\textbf{Remark}  In the above proof, we show that by Carath\'eodory's Theorem, there exists $N_{2p}$ quadrature nodes and weights such that \eqref{equ:conveccomb} is true. In general, we do not know how to choose the $N_{2p}$ sample nodes and weights {\it a priori}.
However, our optimization solver can automatically calculate these quadrature nodes and weights. On the contrary, the linear programming approach in \cite{ryu2015extensions} needs to prescribe the sampling nodes and only calculate the weights, and it can not guarantee the conditions in \eqref{equ:conveccomb}.

\section{Proof of Theorem~\ref{thm:exactrecovery} }
 \label{app:lem1}

In order to show the exact recovery of $y(\vecpar)\in\ten{S}_p$, we need to prove that
\begin{equation}\label{equ:recv}
c_{\basisInd}=\tilde{c}_{\basisInd}, \ \forall \ |\basisInd|\le p.
\end{equation}
Here $\tilde{c}_{\basisInd}$ is obtained by the following numerical scheme:
\begin{align*}
\tilde{c}_{\basisInd} =& \sum_{k=1}^M y(\vecpar_k)\multiGPC_{\basisInd}(\vecpar_k)w_k = \sum_{k=1}^M \sum_{|\boldsymbol{\beta}|=0}^pc_{\boldsymbol{\beta}} \multiGPC_{\boldsymbol{\beta}}(\vecpar_k)\multiGPC_{\basisInd}(\vecpar_k)w_k\\
=&\sum_{|\boldsymbol{\beta}|=0}^p c_{ \boldsymbol{\beta}} \left(\sum_{k=1}^M  \multiGPC_{\boldsymbol{\beta}}(\vecpar_k)\multiGPC_{\basisInd}(\vecpar_k)w_k \right).
\end{align*}
A sufficient condition of   \eqref{equ:recv} is
\begin{equation}\label{equ:prodInt}
\sum_{k=1}^M  \multiGPC_{\boldsymbol{\beta}}(\vecpar_k)\multiGPC_{\basisInd}(\vecpar_k)w_k=\delta_{\basisInd,\boldsymbol{\beta}}.
\end{equation}
In fact, the left-hand side of \eqref{equ:prodInt} is the numerical approximation for the integral $\mathbb{E}[\multiGPC_{\boldsymbol{\beta}}(\vecpar)\multiGPC_{\basisInd}(\vecpar)]$, which is guaranteed to be exact if we have a quadrature rule that can exactly evaluate the integration of every basis function bounded by order $2p$. In other words, \eqref{equ:nmint1} is a sufficient condition for \eqref{equ:prodInt}.

\section{Proof of Theorem \ref{thm:interr}}
 \label{app:thm1}
Before the detailed proof, we first introduce the H\"older's inequality \cite{kuttler2007introduction} that will be used in our theoretical analysis.
\begin{itemize}
\item H\"older's inequality for the Euclidean vector space: for all vectors $\mat{x}, \mat{y}\in\mathbb{R}^n$ and $q_1,q_2 \in [1,+\infty]$ with $\frac1{q_1} + \frac1{q_2} =1$,
\begin{equation}\label{equ:HolderVec1}
\left|\sum_{i=1}^nx_iy_i\right|\le \sum_{i=1}^n|x_iy_i|\le\|\mat{x}\|_{q_1}\|\mat{y}\|_{q_2}.
\end{equation}
For the special case ${q_1}=1$ and ${q_2}=+\infty$, there is
\begin{equation}\label{equ:HolderVec2}
\left|\sum_{i=1}^nx_iy_i\right|\le \sum_{i=1}^n|x_iy_i| \le \|\mat{x}\|_1\|\mat{y}\|_{\infty}.
\end{equation}

\item H\"older's inequality in the probability space:
for all measurable functions $f(\vecpar)$ and $g(\vecpar)$ and $q_1,q_2 \in [1,+\infty]$ with $\frac1{q_1} + \frac1{q_2} =1$:
\begin{equation}\label{equ:HolderProb1}
\mathbb{E}[|f(\vecpar)g(\vecpar)|]\le \|f(\vecpar)\|_{q_1}\|g(\vecpar)\|_{q_2}.
\end{equation}
For the special case $g(\vecpar)\equiv 1$ and ${q_1}={q_2}=2$, there is
\begin{equation}\label{equ:HolderProb2}
\|f(\vecpar)\|_1=\mathbb{E}[|f(\vecpar)|]\le (\mathbb{E}[|f(\vecpar)|^2])^{\frac12}=\|f(\vecpar)\|_2.
\end{equation}
\end{itemize}

Now we start to prove Theorem~\ref{thm:interr}.
According to the definition $y_p(\vecpar)=\sum_{|\basisInd|=0}^p c_{\basisInd}\multiGPC_{\basisInd}(\vecpar)$ and $c_{\basisInd}=\mathbb{E}[y(\vecpar)\multiGPC_{\basisInd}(\vecpar)]$, we have
\begin{equation}\label{equ:Ecc}
\mathbb{E}[y(\vecpar)\multiGPC_j(\vecpar)]=\mathbb{E}[y_p(\vecpar)\multiGPC_j(\vecpar)]=c_j, \forall j=1,\ldots,N_p.
\end{equation}
We consider $j=1$ and $\multiGPC_1(\vecpar)=1$, then the above equation indicates $\mathbb{E}[y(\vecpar)] = \mathbb{E}[y_p(\vecpar)]=c_0$. Based on this observation, we can estimate the difference between $\mathbb{E}[y(\vecpar)]$ and $\mathbb{I}[y(\vecpar)]$:
\begin{align}
\nonumber&\left|\mathbb{E}[y(\vecpar)]-\mathbb{I}[y(\vecpar)]\right| =\left|\mathbb{E}[y_p(\vecpar)]-\mathbb{I}[y(\vecpar)]\right| \\
\le&\underbrace{\left|\mathbb{E}[y_p(\vecpar)]-\mathbb{I}[y_p(\vecpar)]\right|}_{\text{(a)} }+\underbrace{\left|\mathbb{I}[y_p(\vecpar)]-\mathbb{I}[y(\vecpar)]\right|}_{\text{(b)}}.
\label{equ:Euppbd}
\end{align}

Item (a) arises from the error of our numerical quadrature:
\begin{align}
\nonumber
(a)=&\left|\mathbb{E}[y_p(\vecpar)]-\mathbb{I}[y_p(\vecpar)]\right| =\left|\sum_{j=1}^{N_p} c_j\left(\mathbb{E}[\multiGPC_j(\vecpar)]-\mathbb{I}[\multiGPC_j(\vecpar)]\right)\right|\\
 \le & \|\mat{c}\|_{\infty}\|\Phi(\bar{\vecpar})\mat{w}-\mat{e}_1\|_1
\le L\epsilon. \label{equ:errthm1a}
\end{align}
The first inequality results from the H\"older's inequality \eqref{equ:HolderVec2}. The second inequality follows from $\|\Phi(\bar{\vecpar})\mat{w}-\mat{e}_1\|_1\le\epsilon$ in \eqref{equ:numerr}, and we have $\|\mat{c}\|_{\infty}\le L$ because
\begin{equation}
\label{equ:bdc}
|c_j|=|\mathbb{E}[y(\vecpar)\multiGPC_j(\vecpar)]|\le\|y(\vecpar)\|_2\|\multiGPC_j(\vecpar)\|_2\le L\|\multiGPC_j(\vecpar)\|_2=L.\nonumber
\end{equation}

Item (b) is due to the projection  error
\begin{align}
\nonumber (b)=&\left|\mathbb{I}[y_p(\vecpar)-\mathbb{I}[y(\vecpar)]\right|\\
\le& W \|y(\vecpar)-y_p(\vecpar)\|_1 \le W \|y(\vecpar)-y_p(\vecpar)\|_2 \le W\delta.\label{equ:errthm1b}
\end{align}
The first inequality follows from that the operator $\mathbb{I}$ is bounded by $W$ in \eqref{equ:Ibd}. The second inequality results from the H\"older's inequality \eqref{equ:HolderProb2}. The
last inequality follows from our assumption $\|y(\vecpar)-y_p(\vecpar)\|_2\le \delta$ in \eqref{equ:apperr}.


Combing \eqref{equ:Euppbd}, \eqref{equ:errthm1a} and \eqref{equ:errthm1b}, we have
\begin{equation}
\left|\mathbb{E}[y(\vecpar)]-\mathbb{I}[y(\vecpar)]\right|\le L\epsilon+W\delta.
\end{equation}
The proof of Theorem \ref{thm:interr} is complete.

\section{Proof of Theorem \ref{thm:apprerr}}
 \label{app:thm2}

The total error of our stochastic collocation algorithm can be bounded by two terms:
\begin{equation}\label{equ:f_fbar}
\|y(\vecpar)-\tilde y(\vecpar)\|_2\le\|y(\vecpar)-y_p(\vecpar)\|_2+\|y_p(\vecpar)-\tilde y(\vecpar)\|_2. \nonumber
\end{equation}
Based on Assumption 2, the first item is upper bounded by $\delta$. We only need to estimate the second term.
In fact,
\begin{equation}\label{equ:th2b}
\|y_p(\vecpar)-\tilde y(\vecpar)\|_2=\|\sum_{j=1}^{N_p} (c_j-\tilde c_j)\multiGPC_j(\vecpar)\|_2=\sqrt{\sum_{j=1}^{N_p} (c_j-\tilde{c}_j)^2}, \nonumber
\end{equation}
where the last equality follows the fact that the chosen basis functions are orthogonal and normalized. Furthermore,
\begin{align}
\nonumber\left|c_{j}-\tilde c_j\right| =&\left|\mathbb{E}[y_p(\vecpar)\multiGPC_j(\vecpar)]-\mathbb{I}[y(\vecpar)
\multiGPC_j(\vecpar)]\right|\\
\le&\underbrace{\left|\mathbb{E}[y_p(\vecpar)\multiGPC_j(\vecpar)]-\mathbb{I}[y_p(\vecpar)
\multiGPC_j(\vecpar)]\right|}_{(a)}\label{equ:1st}
\\
&+\underbrace{
\left|\mathbb{I}[(y_p(\vecpar)-y(\vecpar))\multiGPC_j(\vecpar)]\right|}_{(b)}.\label{equ:2nd}
\end{align}

Both $y_p(\vecpar)$ and $\multiGPC_j(\vecpar)$ are polynomials bounded by order $p$, so their product is a polynomial bounded by order $2p$, i.e., $y_p(\vecpar)
\multiGPC_j(\vecpar)\in\mathcal{S}_{2p}$. There exists an expansion $y_p(\vecpar)
\multiGPC_j(\vecpar)=\sum_{l=1}^{N_{2p}}a_l\multiGPC_l(\vecpar)$ and an upper bound for term (a):
\begin{align}
\nonumber(a)=&\left|\mathbb{E}[y_p(\vecpar)\multiGPC_j(\vecpar)]-\mathbb{I}[y_p(\vecpar)
\multiGPC_j(\vecpar)]\right|\\
\nonumber= &\left|\sum_{l=1}^{N_{2p}}a_l \left(\mathbb{E}[\multiGPC_l(\vecpar)]-\mathbb{I}[
\multiGPC_l(\vecpar)]\right)\right|\\
\le & \|\mat{a}\|_{\infty}\|\Phi(\bar{\vecpar})\mat{w}-\mat{e}_1\|_1 \le LT\epsilon.\label{equ:bd1st}
\end{align}
The first inequality is due to \eqref{equ:HolderVec2}, and the last inequality follows from
\begin{align}
 a_l=\mathbb{E}[y_p(\vecpar)\multiGPC_j(\vecpar)\multiGPC_l(\vecpar)]\le \|y_p(\vecpar)\|_2 \|\multiGPC_j(\vecpar)\multiGPC_l(\vecpar)\|_2\le  LT
\label{equ:bounda}
\end{align}
where 
$T=\max_{j,l=1,\ldots,N_{2p}}\|\multiGPC_j(\vecpar)\multiGPC_l(\vecpar)\|_2$.

We can also find an upper bound for term $(b)$ in \eqref{equ:2nd}:
\begin{align}
\nonumber (b)=& \left|\mathbb{I}[(y_p(\vecpar)-y(\vecpar))\multiGPC_j(\vecpar)]\right|\\
\nonumber\le &  W\|(y_p(\vecpar)-y(\vecpar))\multiGPC_j(\vecpar)\|_1\\
\nonumber \le &W \|(y_p(\vecpar)-y(\vecpar))\|_2\|\multiGPC_j(\vecpar)\|_2  \\
 = &W \|(y_p(\vecpar)-y(\vecpar))\|_2  \le W\delta. \label{equ:bd2th2}
\end{align}
Combing \eqref{equ:1st}, \eqref{equ:2nd}, \eqref{equ:bd1st} and \eqref{equ:bd2th2}, we have $|c_j-\tilde{c}_j|\le LT\epsilon + W\delta$, and thus $\|y_p(\vecpar)-\tilde y(\vecpar)\|_2\le N_p(LT\epsilon + W\delta)$. Noting that $\|y(\vecpar)-y_p(\vecpar)\|_2\le \delta$, we finally have
\begin{equation}
\|y(\vecpar)-\tilde{y}(\vecpar)\|_2\le \delta + N_p(LT\epsilon + W\delta).
\end{equation}
This completes the proof of Theorem  \ref{thm:apprerr}.

\textbf{Remark} To show $a_l$ is bounded in \eqref{equ:bounda} is equivalent to show  $ \ten{S}_{2p}$ is complete under the Minkowski sum, i.e.,
\begin{equation}\label{equ:mikis}
\ten{S}_p\oplus\ten{S}_p\subset\ten{S}_{2p}.
\end{equation}
In other words, if $ p_1(\vecpar), p_2(\vecpar)\in \ten{S}_{p}$, then $ p_1(\vecpar)p_2(\vecpar)\in \ten{S}_{2p}$.
Intuitively, this is true because the product of two $p$-th order polynomial is a polynomial bounded by order $2p$. A sufficient condition for \eqref{equ:mikis} is that $\|\multiGPC_j(\vecpar)\multiGPC_l(\vecpar)\|_2$ is bounded. In real applications, most widely used distributions include Gaussian, Gaussian mixture distribution, or a distribution on a bounded domain can guarantee that the high-order moments are bounded. As a result, \eqref{equ:mikis} holds in most cases.
But there exists some rare density functions whose high-order moments are not necessarily bounded, such as the log norm distribution.
In this rare case, the error analysis in Theorem \ref{thm:apprerr} may not hold.

\section{Proof for Lemma \ref{lemma:coverr}}
\label{app:coverr}

In order to upper bound $\|\mat{V}-\mat{I}_{N_p}\|_F$, we consider the  error for   each element $\mathbb{E}[\multiGPC_i(\vecpar)\multiGPC_j(\vecpar)]-\mathbb{I} [\multiGPC_i(\vecpar)\multiGPC_j(\vecpar)]$.
We can have an expansion $\multiGPC_i(\vecpar)\multiGPC_j(\vecpar)=\sum_{l=1}^{N_{2p}} a_l\multiGPC_l(\vecpar)$,
then
\begin{align*}
&\left|\mathbb{E}[\multiGPC_i(\vecpar)\multiGPC_j(\vecpar)]-\mathbb{I} [\multiGPC_i(\vecpar)\multiGPC_j(\vecpar)]\right| \\
=&\left|\sum_{l=1}^{N_{2p}} a_l\left(\mathbb{E}[\multiGPC_l(\vecpar)]-\mathbb{I} [\multiGPC_l(\vecpar)]\right)\right| \\
\le
&\|\mat{a}\|_2\|\Phi(\bar{\vecpar})\mat{w}-\mat{e}_1\|_2.
\end{align*}
Because $
\|\mat{a}\|_2^2=\|\multiGPC_i(\vecpar)\multiGPC_j(\vecpar)\|_2^2\le T^2$ and
\begin{align*}
\|\Phi(\bar{\vecpar})\mat{w}-\mat{e}_1\|_2\le \|\Phi(\bar{\vecpar})\mat{w}-\mat{e}_1\|_1\le \epsilon,
\end{align*}
we have
\begin{align*}
\left|\mathbb{E}[\multiGPC_i(\vecpar)\multiGPC_j(\vecpar)]-\mathbb{I} [\multiGPC_i(\vecpar)\multiGPC_j(\vecpar)]\right|\le T\epsilon,
\end{align*}
and further obtain $\|\mat{V}-\mat{I}_{N_p}\|_F\le N_pT\epsilon$.

 \bibliographystyle{IEEEtran}
\bibliography{nonGaussianSC}

 \begin{IEEEbiography}
  [{\includegraphics[width=1in,height=1.25in,clip,keepaspectratio]{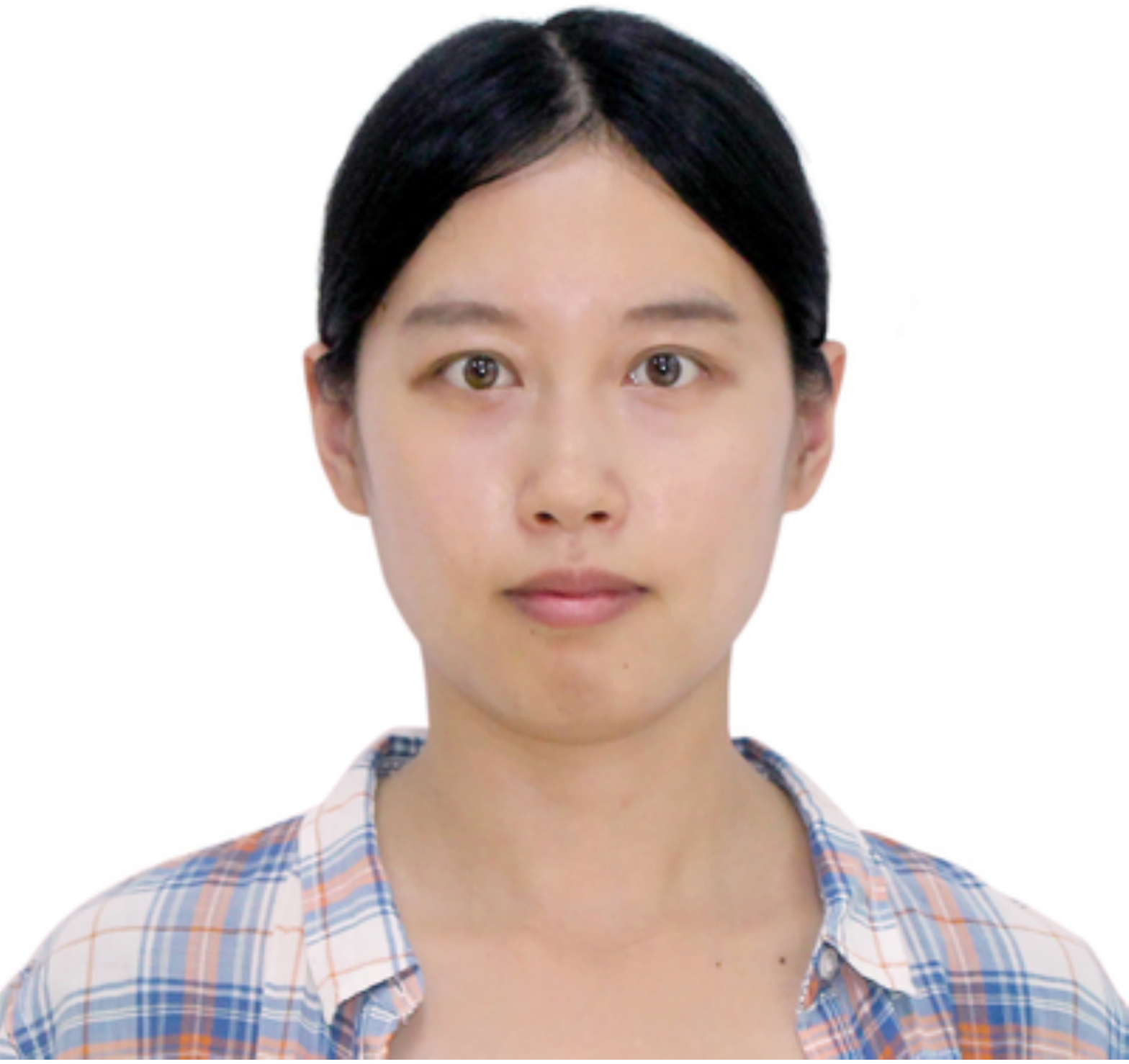}}]{Chunfeng Cui} received the Ph.D. degree in computational mathematics from Chinese Academy of Sciences, Beijing, China, in 2016 with a specialization in numerical optimization. From 2016 to 2017, she was a Postdoctoral Fellow at City University of Hong Kong, Hong Kong. In 2017 She joined the Department of Electrical and Computer Engineering at University of California Santa Barbara as a Postdoctoral Scholar.
From 2011 her research activity is mainly focused in the areas of tensor analysis and its applications. She has been working on numerical optimization algorithms for tensor problems, and its applications for machine learning and for uncertainty quantification of nano-scale chip design. She received the Best Paper Award of the IEEE EPEPS 2018.
\end{IEEEbiography}

\begin{IEEEbiography}
 [{\includegraphics[width=1in,height=1.25in,clip,keepaspectratio]{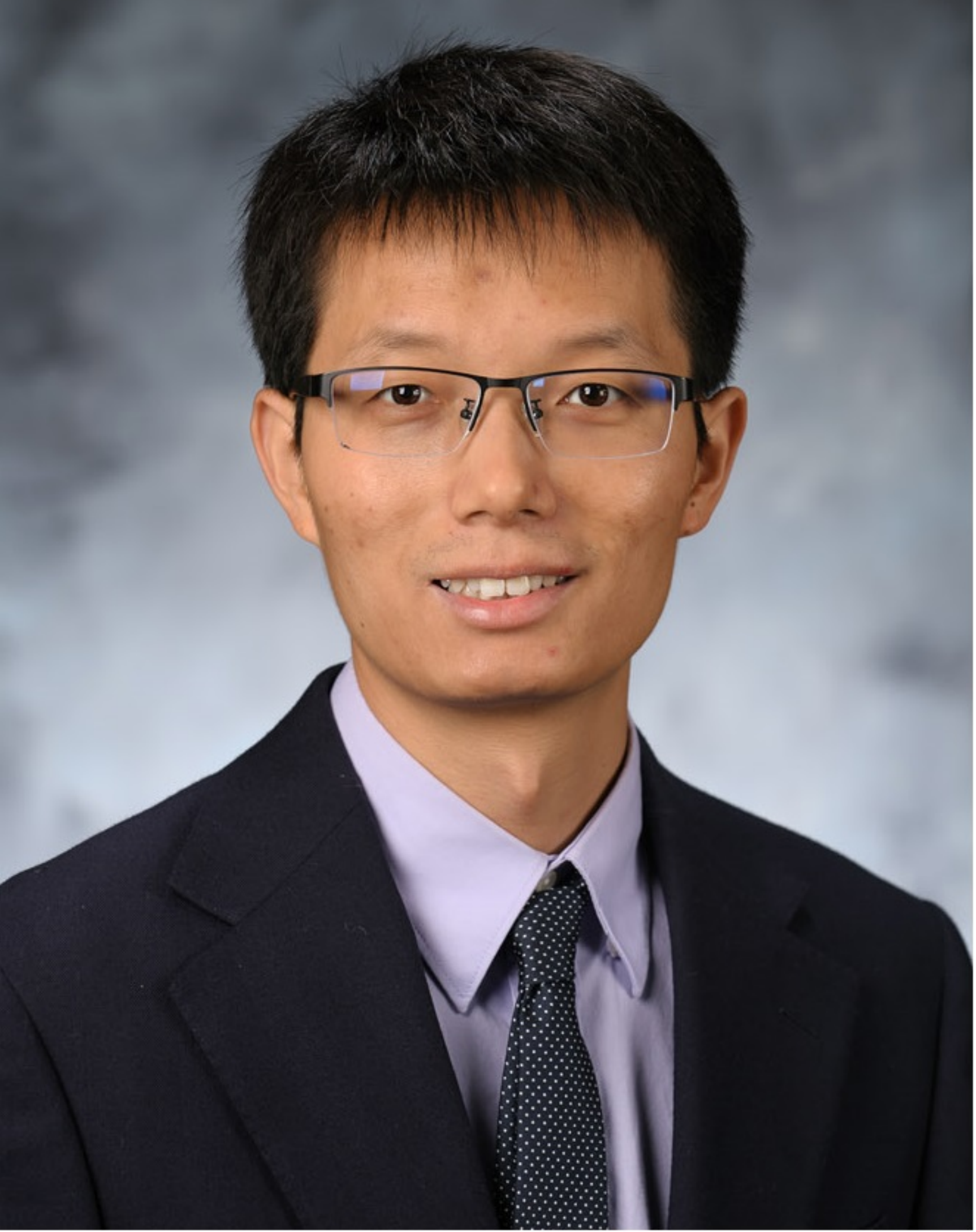}}]{Zheng Zhang} (M'15) received his Ph.D degree in Electrical Engineering and Computer Science from the Massachusetts Institute of Technology (MIT), Cambridge, MA, in 2015. He is an Assistant Professor of Electrical and Computer Engineering with the University of California at Santa Barbara (UCSB), CA. His research interests include uncertainty quantification with applications to the design automation of multi-domain systems (e.g., nano-scale electronics, integrated photonics, and autonomous systems), and tensor computational methods for high-dimensional data analytics. His industrial experiences include Coventor Inc. and Maxim-IC; academic visiting experiences include UC San Diego, Brown University and Politechnico di Milano; government lab experiences include Argonne National Labs.

Dr. Zhang received the Best Paper Award of IEEE Transactions on Computer-Aided Design of Integrated Circuits and Systems in 2014, the Best Paper Award of IEEE Transactions on Components, Packaging and Manufacturing Technology in 2018, two Best Paper Awards (IEEE EPEPS 2018 and IEEE SPI 2016) and three additional Best Paper Nominations (CICC 2014, ICCAD 2011 and ASP-DAC 2011) at international conferences. His PhD dissertation was recognized by the ACM SIGDA Outstanding Ph.D Dissertation Award in Electronic Design Automation in 2016, and by the Doctoral Dissertation Seminar Award (i.e., Best Thesis Award) from the Microsystems Technology Laboratory of MIT in 2015. He was a recipient of the Li Ka-Shing Prize from the University of Hong Kong in 2011.
\end{IEEEbiography}

\end{document}